\newcommand{\Rb}{\mathbbm{R}}      
\newcommand{\Dc}{\mathcal{D}}
\newcommand{\Eb}{\mathbbm{E}}
\newcommand{\Fc}{\mathcal{F}}
\newcommand{\Kc}{\mathcal{K}}
\newcommand{\Lc}{\mathcal{L}}
\newcommand{\Sc}{\mathcal{S}}
\newcommand{\Wc}{\mathcal{W}}
\newcommand{\Zc}{\mathcal{Z}}
\newcommand{\argmin}{\mathop{\rm argmin}}
\newcommand{\dist}{\mathop{\rm dist}}
\newcommand*{\dt}[1]{\overset{\hbox{\,\tiny${}_\bullet$}}{#1}}
\newtheorem{proposition}{Proposition}[section]
\newtheorem{theorem}[proposition]{Theorem}
\newtheorem{corollary}[proposition]{Corollary}
\newtheorem{lemma}[proposition]{Lemma}
\newtheorem{definition}[proposition]{Definition}
\newtheorem{remark}[proposition]{Remark}
\newtheorem{example}[proposition]{Example}
\def\nicebox{\vbox{\hrule height0.6pt\hbox{%
   \vrule height1.3ex width0.6pt\hskip0.8ex
   \vrule width0.6pt}\hrule height0.6pt  }}
\newcommand{\conv}{\text{conv}}
\newcommand{\proj}{{\rm Proj}}
\newenvironment{tightlist}[1]{%
    \list{{\textup{(\roman{enumi})}}}{\settowidth\labelwidth{{\textup{(#1)}}}
    \leftmargin 0pt \advance\leftmargin\labelsep \itemindent \parindent
    \parsep 0pt plus 1pt minus 1pt \topsep 0pt \itemsep 0pt
    \usecounter{enumi}}}{\endlist}
\begin{document}

\title{A Stochastic Subgradient Method for Nonsmooth Nonconvex Multi-Level Composition Optimization
}



\author{Andrzej Ruszczy\'nski\footnote{Rutgers University, Department of Management Science and Information Systems, Piscataway, NJ 08854, USA;
              email: {rusz{@}rutgers.edu}}
}


\date{January 29, 2020}

\maketitle

\maketitle

\begin{abstract}
We propose a single time-scale stochastic subgradient method for constrained
optimization of a composition of several nonsmooth and nonconvex functions.  The functions are  assumed to be locally Lipschitz and  differentiable in a generalized sense. Only stochastic estimates of the values and
generalized derivatives of the functions are used. The method is parameter-free. We prove convergence with probability one of the method, by associating with it a system of differential inclusions
and devising a nondifferentiable Lyapunov function for this system. For problems with functions having Lipschitz continuous derivatives,
 the method finds a point satisfying an optimality measure with error of order $N^{-1/2}$, after executing $N$ iterations with constant stepsize.

\noindent
\emph{Keywords:}{ Stochastic Composition Optimization, Nonsmooth Optimization, Stochastic Approximation, Risk-Averse Optimization, Stochastic Variational Inequality}\\
\emph{AMS}:
  90C15, 49J52, 62L20
\end{abstract}

\section{Introduction}

We consider the composition optimization problem
\begin{equation}
\label{main_prob}
\min_{x\in X}\; f_1\Big(x,f_2\big(x, \cdots f_{M-1}\big(x,f_M(x)\big)\cdots \big)\Big),
\end{equation}
where $X\subset \Rb^n$ is convex and closed, and $f_m:\Rb^n\times \Rb^{d_{m+1}}\to \Rb^{d_m}$, $m=1,\dots,M-1$, and
$f_M:\Rb^n\to \Rb^{d_M}$ are locally Lipschitz continuous functions, possibly neither convex nor smooth.

 The Clarke derivatives of $f_m(\cdot,\cdot)$ are not available;
 instead, we postulate access to their random estimates.
Such situations occur in \emph{stochastic composition optimization}, where
\begin{equation}
\label{stoch_prob}
f_m(x,u_{m+1}) = \Eb\big[ \varphi_m(x,u_{m+1},\xi_m)\big],\quad m =1,\dots,M,
\end{equation}
in which $\xi_m$ is a random vector, and $\mathbb{E}$ denotes the expected value. Two examples
illustrate the relevance of the problem.

\begin{example}[\it{Risk-Averse Optimization}]
\label{e:1}
{\rm Given some random loss function $H:\Rb^n\times \Omega\to \Rb^n$, we consider the random variable $Z = H(x)$ as an element of a Banach space $\Zc$ (for example,
$\Lc_2(\varOmega,\Fc,P)$) and evaluate its quality by the functional $\rho: \Zc \to \Rb$, called a \emph{risk measure}. This leads to the problem
\begin{equation}
\label{risk-opt}
\min_{x\in X} F(x) = \rho[H(x)].
\end{equation}
Particularly simple and useful is the \emph{mean--semideviation measure} \cite{OR:1999,OR:2001}, which
 is an example of a coherent measure of risk \cite{ADEH:1999} (see also \cite{follmer2011stochastic,shapiro2009lectures} and the references therein). It has the following form:
\begin{equation}
\label{msd}
\rho[Z] =  \Eb[Z]  + \varkappa\, \bigg(\Eb\Big[\Big(\max\big(0,Z - \Eb[Z]\big)\Big)^p\Big]\bigg)^{1/p},\quad p \ge 1.
\end{equation}
For $p=1$, problem \eqref{risk-opt}  with the risk measure \eqref{msd} can be written in the form \eqref{main_prob}
with two functions
\begin{align*}
f_1(x,u_2) &= \Eb\Big[H(x) +  \varkappa\, \max\big(0,H(x) - u_2\big)\Big],\\
f_2(x) &= \Eb[H(x)].
\end{align*}
For $p=2$, problem \eqref{risk-opt} with the risk measure \eqref{msd}  can be cast (with a minor regularization) in the form \eqref{main_prob}
with three functions
\begin{align*}
f_1(x,u_2) &= \Eb\big[H(x)\big] + \varkappa\,(\varepsilon+ u_2)^{1/2}, \quad \varepsilon>0,\\
f_2(x, u_3) &= \Eb\Big[\Big(\max\big(0,H(x) - u_3\big)\Big)^2\Big],\\
f_3(x) &= \Eb[H(x)].
\end{align*}
We added $\varepsilon>0$ into the definition of $f_1(\cdot,\cdot)$ to ensure local Lipschitz property for all $u_2\ge 0$.
Frequently, $H(\cdot)$ is non-convex and non-differentiable
in modern machine learning models.
}\hfill \nicebox
\end{example}

\begin{example}[\it{Stochastic Variational Inequality}]
\label{e:SVI}
{\rm
We have a random mapping $H:\Rb^n\times \Omega\to \Rb^n$
on some probability space $(\varOmega,\Fc,P)$ and a closed convex set $X$.  The problem is to find ${x}\in X$ such that
\begin{equation}
\label{SVI}
\big\langle \mathbb{E}[H(x)], \xi - {x} \big\rangle  \le 0, \quad \text{for all}\quad \xi \in X.
\end{equation}
The reader is referred to the recent publications \cite{iusem2017extragradient} and \cite{koshal2013regularized}
for a discussion of the challenges associated with this problem and its applications to stochastic equilibria.
We may reformulate problem \eqref{SVI} as \eqref{main_prob} by defining $f_1: \Rb^n \times \Rb^n\to \Rb$ as
\begin{equation}
f_1(x,u) = \max_{y \in X}\, \left\{ \langle u, y - x \rangle - \frac{r}{2} \| y - x\|^2 \right\},\quad r>0,\label{gad_function}
\end{equation}
and $f_2(x) = \mathbb{E}[H(x)]$.
In this case, we  have access to the gradient of $f_1$, but the value and the Jacobian of $f_2$ must be estimated. We do not require
$H(\cdot)$ to be monotone or differentiable.
}\hfill \nicebox
\end{example}

The research on stochastic subgradient methods for nonsmooth and nonconvex functions started in the late 1970's: see
Nurminski \cite{Nurminski1979numerical} for weakly convex functions and a general methodology for studying convergence of non-monotonic methods,
Gupal \cite{gupal1979stochastic} for convolution smoothing (mollification) of Lipschitz functions
and resulting finite-difference methods, and  Norkin \cite{norkin:phd} and \cite[Ch. 3 and 7]{mikhalevich1987nonconvex}
for unconstrained problems with ``generalized differentiable'' functions.

Recently, by an approach via differential inclusions, Duchi and Ruan \cite{duchi2018stochastic} studied proximal methods for
 sum-composite problems with weakly convex functions,
 Davis \emph{et al.} \cite{davis2018stochastic} studied the subgradient method for locally Lipschitz Whitney $\mathcal{C}^1$-stratifiable functions
 with constraints, and Majewski \emph{et al.} \cite{majewski2018analysis} studied several methods for subdifferentially regular Lipschitz functions.

 The research on composition optimization problems started from
penalty functions for stochastic constraints and composite regression models in \cite{Ermoliev71} and \cite[Ch. V.4]{ermoliev1976methods}.
An established approach was to use two-level stochastic
recursive algorithms with two stepsize sequences in different time scales:  a slower one for updating the
main decision variable $x$, and a faster one for tracking the value of the inner function(s).
References \cite{wang2017stochastic,WaLiFa17} provide a detailed account of these techniques and existing results.
In \cite{yang2019multi} these ideas were extended to multilevel problems of form \eqref{main_prob}, albeit with
multiple time scales and under continuous differentiability assumptions.

A Central Limit Theorem for problem \eqref{stoch_prob} has been established in  \cite{dentcheva2017statistical}.  Large deviation bounds for the empirical optimal value were derived in \cite{ermoliev2013sample}.

The first single time-scale method for a two-level version ($M=2$) of problem \eqref{main_prob} with continuously differentiable functions has been recently proposed in \cite{ghadimi2018single}. It has  the complexity of ${\cal O}(1/\epsilon^2)$ to obtain an $\varepsilon$-solution
 of the problem, the same as methods for one-level unconstrained stochastic optimization. However, the construction of the method
 and its analysis depend on the Lipschitz continuity of the gradients of the functions involved, and its parameters
 depend on the corresponding Lipschitz constants.

To the best of our knowledge, there has been no research on stochastic subgradient methods for composition problems of form \eqref{main_prob}
where the functions involved may be neither convex nor smooth.

Our main objective is propose a single time-scale  method for solving the multiple composition problem \eqref{main_prob}, and to establish its convergence with probability one on
a broad class of problems, in which the functions $f_m(\cdot,\cdot)$, $m=1,\dots,M-1$, are assumed to be locally Lipschitz and admit
a chain rule, while $f_M(\cdot)$ may be only differentiable in a generalized sense  (to be defined in section \ref{s:2}). The class of such functions is broader than
the class of locally Lipschitz semismooth functions \cite{mifflin1977semismooth}. The method's  few parameters may be set to arbitrary positive constants. The main idea is to lift the problem to a higher dimensional space and to devise a single time scale
scheme for not only estimating the
solution, but also the values of all functions nested in \eqref{main_prob}, and the generalized gradient featuring in the optimality condition.

Our approach uses the differential equation method (see \cite{ljung1977analysis,kushner2012stochastic,kushner2003stochastic} and the references therein). Extension to differential inclusions
was proposed in  \cite{benaim2005stochastic,benaim2006stochastic} and further developed, among others,
in \cite{borkar2009stochastic,duchi2018stochastic,davis2018stochastic,majewski2018analysis}. In our analysis, we use a specially
tailored nonsmooth Lyapunov function for the method, which generalizes the idea of \cite{ruszczynski1986method,ruszczynski87}
to the composite setting.

The second contribution is the error analysis after finitely many iterations of the method. We prove that a non-optimality measure for problem \eqref{main_prob}, which corresponds
to the squared  norm of a gradient in the unconstrained one-level case, decreases at the rate $1/\sqrt{N}$, where $N$ is the number of iterations of the method. This matches
the best rate estimates for general unconstrained one-level problems \cite{GhaLan12}, and the statistical estimate of  \cite{dentcheva2017statistical} for plug-in estimates of composition risk functionals; it outperforms the estimate of the method of \cite{yang2019multi}.

The paper is organized as follows. In \S \ref{s:2}, we recall the main facts on generalized differentiation and the chain rule on a path. In \S \ref{s:3}, we describe our stochastic subgradient
method for problem \eqref{main_prob}. In \S \ref{s:4} we introduce relevant multifunctions and prove
the boundedness of the sequences generated by the method.
\S \ref{s:analysis} contains the proof of its  convergence for nonconvex and nonsmooth functions. Finally, in \S \ref{s:5}, we provide solution quality guarantees after finitely many iterations with
a constant stepsize, in the case when the functions have
Lipschitz continuous derivatives.

\section{Generalized Subdifferentials of Composite Functions}
\label{s:2}

We consider problem \eqref{main_prob} for locally Lipschitz continuous functions $f_m(\cdot)$
satisfying additional conditions of generalized differentiability, subdifferential regularity, or Whitney $\mathcal{C}^1$-stratification. Recall that
$f:\Rb^n\to \Rb^m$ is \emph{locally Lipschitz}, if for every $x_0\in \Rb^n$ a constant $L$ and an open set $U$ containing $x_0$ exist, such that
$\|f(x) - f(y)\| \le L\|x -y\|$ for all $x,y\in U$. A \emph{Clarke generalized Jacobian} of $f(\cdot)$ at $x$ is defined as follows
\cite{clarke1975generalized}:
\[
\partial\! f(x) = \conv\big\{ \lim_{\substack{{y\to x}\\{y\in \Dc(f)}}} f'(y)\big\},
\]
where $\Dc(f)$ is the set of points $y$ at which the usual Jacobian $f'(y)$ exists. To simplify further notation,
$\partial\! f(x)$ of a function $f:\Rb^n\to \Rb^m$ is always understood as a set of  $m\times n$ matrices (also for $m=1$).
A locally Lipschitz function
$f:\Rb^n\to\Rb$ is \emph{subdifferentally regular} if for any $x\in \Rb^n$ and $d\in \Rb^n$ the directional derivative
exists and satisfies the equation:
\[
\lim_{\tau\downarrow 0} \frac{f(x+\tau d)-f(x)}{\tau} = \max_{g \in \partial\!f(x)} gd.
\]
\emph{Whitney $\mathcal{C}^1$-stratification} is a partition of a graph of $f(\cdot)$ into finitely many subsets (strata), such that within each of them the function is
continuously differentiable, and a special compatibility condition for the normals on the common parts of the closures of the strata is satisfied
(see, \cite{bolte2007clarke} and the references therein).

We  recall the following definition.

\begin{definition}[\cite{norkin1980generalized}]
\label{d:Norkin}
A function $f:\Rb^n\to\Rb$ is  \emph{differentiable in a generalized sense at a point} $x\in \Rb^n$,
if an open set $U\subset \Rb^n$ containing $x$,
and a nonempty, convex, compact valued, and upper semicontinuous multifunction
$G_f: U \rightrightarrows \Rb^n$ exist, such that for all $y\in U$ and all $g \in G_f(y)$ the following equation is true:
\[
f(y) = f(x) +  g(y-x) + o(x,y,g),
\]
with
\[
\lim_{y\to x} \sup_{g\in G_f(y)} \frac{o(x,y,g)}{\|y-x\|}=0.
\]
The set $G_f(y)$ is  the \emph{generalized derivative} of $f$ at $y$.
If a function is differentiable in a generalized sense at every  $x \in \Rb^n$ with the same generalized derivative $G_f:\Rb^n\rightrightarrows \Rb^n$, we call it
simply \emph{differentiable in a generalized sense}. For functions with values in $\Rb^n$, generalized differentiability is understood component-wise.
\end{definition}

The class of such functions is contained in the set of locally Lipschitz functions,
and contains all semismooth locally Lipschitz functions.
 The generalized derivative $G_f(\cdot)$ is not uniquely defined in Definition \ref{d:Norkin}, which is essential for us,
but the Clarke Jacobian $\partial\!f(x)$ is an inclusion-minimal generalized derivative.
The class of such functions is closed with respect to composition  and
expectation, which allows for easy generation of stochastic subgradients in our case.
For full exposition, see \cite[Ch. 1 and 6]{mikhalevich1987nonconvex}.

An essential step in the analysis of stochastic recursive algorithms by the differential inclusion method is the
\emph{chain rule on a path} (see \cite{davis2019stochastic}
and the references therein). For an absolutely continuous function $p:[0,\infty)\to\Rb^n$ we denote
by $\dt{p}(\cdot)$ its weak derivative: a measurable function such that
\[
p(t) = p(0) + \int_0^t \dt{p}(s)\;ds,\quad \forall \;  t \ge 0.
\]
\begin{definition}
\label{d:chain}
A locally Lipschitz continuous function $f:\Rb^n \to \Rb^m$ admits a chain rule on absolutely continuous paths, if
\begin{equation}
\label{chain-path}
f(p(T))- f(p(0)) = \int_0^T   g(p(t)) \, \dt{p}(t)  \;dt,
\end{equation}
for any absolutely continuous path $p:[0,\infty)\to \Rb^n$, all selections $g(\cdot) \in \partial\!f(\cdot)$, and all $T>0$.
It admits a chain rule on continuously differentiable paths, if \eqref{chain-path} is true for all continuously differentiable
$p:[0,\infty)\to \Rb^n$, all selections $g(\cdot) \in \partial\!f(\cdot)$, and all $T>0$.
\end{definition}
Formula \eqref{chain-path} is true for convex functions \cite{brezis1971monotonicity} and, as recently demonstrated
in \cite{drusvyatskiy2015curves}, for
subdifferentiably regular locally Lipschitz functions
and Whitney $\mathcal{C}^1$-stratifiable locally Lipschitz functions.
In \cite{Rusz2019} we proved that generalized differentiable functions admit the chain rule on generalized differentiable paths.

To formulate optimality conditions for our problem, and construct and analyze our meth\-od, we need to introduce several relevant multifunctions.
For a point $x\in \Rb^n$ we consider the generalized Jacobians  $\partial\! f_m(\cdot)$, $m=1,\dots,M-1$, and we recursively define the sets and vectors:
\begin{equation}
\label{GF}
\begin{aligned}
G_M(x) &= \partial\! f_M(x),\quad v_M = f_M(x);\\
G_m(x) &= {\rm conv} \big\{ z\in \Rb^{n}: z = g_x + g_u J,\ (g_x,g_u) \in \partial\! f_m(x, v_{m+1}), \ J \in G_{m+1}(x) \big\}, \\
&\quad v_m = f_m(x,v_{m+1}),\quad m= M-1,\dots,1.
\end{aligned}
\end{equation}
By \cite[Thm. 1.6]{mikhalevich1987nonconvex}, 
 each set $G_m(x)$ is a generalized Jacobian of the function
 \begin{equation}
 \label{Fm}
 F_m(x) = f_m\Big(x,f_{m+1}\big(x, \cdots f_{M-1}\big(x,f_M(x)\big)\cdots \big)\Big), \quad m=1,\dots,M,
 \end{equation}
 at $x$.
We also have $\partial F_m(x) \subseteq G_m(x)$ \cite{clarke1981generalized}.
We call a point $x^*\in X$ \emph{stationary} for problem \eqref{main_prob}, if
\begin{equation}
\label{stationary}
0 \in  G_1(x^*) + N_X(x^*).
\end{equation}
The set of stationary points of problem \eqref{main_prob}
is denoted by $X^*$.

\section{The single time-scale method with filtering}
\label{s:3}

The method generates { $M+2$} random sequences: approximate solutions $\{x^k\}_{k\ge 0}\subset \Rb^n$,
path-averaged generalized subgradient estimates $\{z^k\}_{k\ge 0}\subset \Rb^n$, and path-averaged inner functions estimates
$\{u_m^k\}_{k\ge 0}\subset \Rb^{d_{m}}$, $m=1,\dots,M$, all defined
on a certain probability space $(\Omega,\Fc,P)$. We let $\Fc_k$ to be
the $\sigma$-algebra generated by
$\{x^0,\dots,x^k,z^0,\dots,z^k,u^0,\dots,u^k\}$, with each $u^j=(u_1^j,\dots,u_{M}^j)$.


The method starts from
$x^0 \in X$, $z^0 \in \Rb^n$, $u^0 \in \Rb^{d_1}\times\dots\Rb^{d_M}$, and uses parameters  $a>0$, $b>0$, and $\rho>0$. 
At each iteration $k=0,1,2,\dots$, we compute
\begin{equation}
\label{QP}
y^k = \argmin_{y \in X}\  \left\{\langle z^k, y-x^k \rangle + \frac{\rho}{2} \|y-x^k\|^2\right\},
\end{equation}
and, with an $\Fc_k$-measurable stepsize $\tau_k \in \big(0,\min(1,1/a,1/b)\big]$, we set
\begin{equation}
\label{def_xk}
x^{k+1} = x^k + \tau_k (y^k-x^k).
\end{equation}
Then, we obtain statistical estimates:
\begin{align*}
& \tilde{J}_m^{k+1}=\begin{bmatrix}\tilde{J}_{mx}^{k+1} & \tilde{J}_{mu}^{k+1}\end{bmatrix}
\text{ of }{J}_m^{k+1}=\begin{bmatrix} {J}_{mx}^{k+1} & {J}_{mu}^{k+1}\end{bmatrix}\in \partial\!f_m(x^{k+1},u_{m+1}^k),\quad m=1,\dots,M-1;\\
& \tilde{J}_M^{k+1}
\text{ of }{J}_M^{k+1}\in \partial\!f_M(x^{k+1});\\
& \tilde{h}_m^{k+1} \text{ of } f_m(x^{k+1},u_{m+1}^k),\quad  m=1,\dots,M-1;\\
& \tilde{h}_M^{k+1} \text{ of } f_M(x^{k+1}).
\end{align*}
We use the stochastic subgradients $\tilde{J}_m^{k+1}$ to construct a biased estimate of a subgradient of the composite function:
\begin{equation}
\label{g-fold}
\tilde{g}_M^{k+1} = \tilde{J}_M^{k+1}, \quad
\tilde{g}_{m}^{k+1} = \tilde{J}_{mx}^{k+1} +  \tilde{J}_{mu}^{k+1} \tilde{g}_{m+1}^{k+1}, \quad m=M-1,\dots,1.
\end{equation}
Finally, we update the path averages by backward recursion as follows:
\begin{align}
z^{k+1} &= z^k + a\tau_k \Big( \big[\tilde{g}_{1}^{k+1}\big]^T  - z^k\Big),  \label{def_zk}\\
u_M^{k+1} &= u_M^k +  \tilde{J}_M^{\,k+1} (x^{k+1}-x^k) + b \tau_k \big(\tilde{h}_M^{k+1}-u_M^k\big),  \label{def_ukM}\\
u_m^{k+1} &= u_m^k +  \tilde{J}_m^{\,k+1} \begin{bmatrix}  x^{k+1}-x^k \\u_{m+1}^{k+1}-u_{m+1}^k \end{bmatrix}
+ b \tau_k \big(\tilde{h}_m^{k+1}-u_m^k\big),\quad  m=M-1,\dots, 1.\label{def_ukm}
\end{align}

In fact, we do not need the sequence $\{u_1^k\}$ for the operation of the method, but we include it for
uniformity of notation; it will also provide an estimate of the function value at the optimal solution.

We will analyze convergence of the algorithm \eqref{QP}--\eqref{def_ukm} under the following conditions:\vspace{1ex}
\begin{description}
\item[(A1)] The set $X$ is convex and compact;
\item[(A2)] The functions $f_m(\cdot,\cdot)$, $m=1,\dots,M-1$, are Lipschitz continuous and admit the chain rule \eqref{chain-path} for every path $(x(\cdot),u_{m+1}(\cdot))$ with a continuously differentiable $x(\cdot)$ and absolutely continuous $u_{m+1}(\cdot)$.
    Moreover, for every $x\in X$ and $u_{m+1}\in \Rb^{d_{m+1}}$ the $u$-part of the generalized Jacobian, $\partial_u f_m(x,u_{m+1})$, is single-valued.
\item[(A3)] The function $f_M(\cdot)$ is Lipschitz continuous and admits the chain rule \eqref{chain-path} for every continuously differentiable path $x(\cdot)$.
\item[(A4)] A constant $C$ exists, such that   $\|g \| \le C( 1 + \|v\|)$
for all $g \in \partial f_m(x,v)$, all $x\in X$,
and all $v\in \Rb^{d_{m+1}}$, $m=1,\dots,M-1$.
\item[(A5)] The set $\{ F_1(x): x\in X^*\}$ does not contain an interval of nonzero length.
\item[(A6)] $\tau_k \in \big(0,\min(1,1/a,1/b)\big]$ for all $k$,  $\sum_{k=0}^\infty \tau_k = \infty$,
$\sum_{k=0}^\infty \Eb[\tau_k^2] < \infty$.
\item[(A7)] For all $k$,
\begin{tightlist}{iii}
\item $\tilde{h}_m^{k+1} = f_m(x^{k+1},u_{m+1}^{k}) + e_m^{k+1} + \delta_m^{k+1}$, $m=2,\dots,M-1$, and\\
$\tilde{h}_M^{k+1} = f_M(x^{k+1}) + e_M^{k+1} + \delta_M^{k+1}$, with $\lim_{k\to \infty} \delta_m^{k+1}=0$ and\\
$\Eb\big\{e_m^{k+1}\big|\Fc_k\big\} = 0$,  $\Eb\big\{\|e_m^{k+1}\|^2|\Fc_k\big\}\le \sigma_e^2$,  $m=2,\dots,M$;
\item $\tilde{J}_m^{\,k+1} = J_m^{k+1} + E_m^{k+1} + \Delta_m^{k+1}$,with \\
$J_m^{k+1}\in \partial f_m(x^{k+1},u_{m+1}^k)$, $m=1,\dots,M-1$, $J_M^{k+1}\in \partial f_M(x^{k+1})$,\\
$\Eb\big\{E_m^{k+1}\big|\Fc_k\big\} = 0$, $\Eb\big\{\|E_m^{k+1}\|^2|\Fc_k\big\}\le \sigma_E^2$, $\lim_{k\to \infty}  \Delta_m^{k+1}=0$;
\end{tightlist}
\item[(A8)] For $m=2,\dots,M-1$, the error $ E_{mu}^{k+1} $ is conditionally independent of $E_\ell^{k+1}$ and $e_\ell^{k+1}$, $\ell=m+1,\dots,M$, given $\Fc_k$.
\end{description}

\begin{remark}
\label{r:chain}
Assumption (A2) is satisfied if
\[
f_m(x,u_{m+1}) = \varphi_m(\psi_m(x),u_{m+1}),\quad m= 1,\dots, M-1,
\]
where $\varphi_m(\cdot,\cdot)$ is continuously differentiable, and $\psi_m(\cdot)$ is differentiable in a generalized sense.
Indeed, by virtue of \cite[Thm. 1]{Rusz2019}, if a path $x(t)$ is continuously differentiable, the function $\psi_m(\cdot)$ admits
the chain rule on this path.
This implies that the path $\psi_m(x(t))$ is absolutely continuous. Consequently, $\varphi(\psi_m(x(t)),u_{m+1}(t))$
admits the chain rule. { This is true in both Examples \ref{e:1} ($p>1$) and \ref{e:SVI}, with a generalized differentiable
function (operator) $\Eb[H(\cdot)]$. }
\end{remark}

\section{Basic Properties}
\label{s:4}

As the calculation of generalized subgradients of a composition requires the knowledge of the values of the inner functions,
we will also need a more general  multifunction
$\varGamma:\Rb^n\times\Rb^n\times\Rb^{d_{1}}\times \dots \times \Rb^{d_{M}} \rightrightarrows \Rb^n\times\Rb^{d_{1}}\times \dots \times \Rb^{d_{M}}$:
\begin{equation}
\label{Gamma-new-2}
\begin{aligned}
\varGamma(x,z,u) &= \Big\{ (g,v_1,\dots,v_M): \exists D_m\in \partial f_m(x,u_{m+1}), \;\exists D_M\in \partial f_M(x),\\
& { v_M = D_{M}\big(\bar{y}(x,z)-x\big) + b\big(  f_M(x)- u_M\big)},\\
& v_m = D_{mx}\big(\bar{y}(x,z)-x\big) + D_{mu} v_{m+1}+ b\big(  f_m(x,u_{m+1})- u_m\big), \; m=1,\dots,M-1,\\
& g_M = D_M,\;g_m = D_{mx} + D_{mu} g_{m+1},\;m=1,\dots,M-1,\; g = a (g_1^T-z) \Big\}.
\end{aligned}
\end{equation}
{ Here,
\[
\bar{y}(x,z) = \argmin_{y\in X} \left\{\langle z, y-x \rangle + \frac{\rho}{2} \|y-x\|^2\right\}.
\]}
The multifunction $\varGamma(\cdot)$ is convex and compact valued, due to assumptions (A2) and (A3).

This multifunction allows us to write the iterations of $\{u^k_m\}$ and $\{z^k\}$ in a more compact way, and to establish
the boundedness of these sequences. In our analysis we consider the $x$- and $u$-components of $\varGamma(\cdot)$ separately,
by writing $g \in \varGamma_x(x,z,u)$ and $v_m\in \varGamma_{mu}(x,z,u)$,
but it is essential to keep in mind that they all derive from the same multifunction (same $D_m$'s).
\begin{lemma}
\label{l:uk-bounded}
If conditions {\rm (A1)} and {\rm (A6)--(A8)} are satisfied, then for all $k$ the following relations are true:
\begin{equation}
\label{u-abstract}
u_m^{k+1} \in  u_m^k + \tau_k \varGamma_{mu}(x^{k+1},z^k,u^k) + \tau_k \theta_m^{k+1} + \tau_k \alpha_m^{k+1},
\end{equation}
where, for some constant $C^{\theta}_m$,
\begin{equation}
\label{theta-abstract}
\Eb\big[\theta_m^{k+1}\,\big|\, \Fc_k\big]=0,  \quad \Eb\big[\|\theta_m^{k+1}\|^2\,\big|\,\Fc_k\big]\le C^{\theta}_m, \quad  k=0,1,\dots
\end{equation}
and
\begin{equation}
\label{alpha-abstract}
\lim_{k\to \infty} \alpha_m^{k+1} = 0 \quad \text{a.s.}.
\end{equation}
Moreover, the sequence $\{u^k\}$ is bounded a.s..
\end{lemma}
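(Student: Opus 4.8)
The plan is to first rewrite the algorithmic updates \eqref{def_ukM}--\eqref{def_ukm} in the abstract form \eqref{u-abstract} by a backward induction on $m=M,M-1,\dots,1$, and only afterwards to extract boundedness from that form. The starting observation is that, because $x^{k+1}=x^k+\tau_k(y^k-x^k)$ with $y^k=\bar{y}(x^k,z^k)$ and $\tau_k$ all $\Fc_k$-measurable, the iterate $x^{k+1}$ is itself $\Fc_k$-measurable; this is the single-time-scale feature. Consequently $x^{k+1}-x^k=\tau_k(\bar{y}(x^k,z^k)-x^k)$, and the mean Jacobians $J_m^{k+1}\in\partial f_m(x^{k+1},u_{m+1}^k)$, their biases $\Delta_m^{k+1}$, the values $f_m(x^{k+1},u_{m+1}^k)$, and the vectors $\bar{y}(x^{k+1},z^k)-x^{k+1}$ featuring in $\varGamma(x^{k+1},z^k,u^k)$ are all $\Fc_k$-measurable, while only the errors $E_m^{k+1},e_m^{k+1}$ carry fresh randomness. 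Choosing $D_m:=J_m^{k+1}$ in \eqref{Gamma-new-2} is legitimate, since its $u$-argument is exactly $u_{m+1}^k$ and its $u$-part is well defined by the single-valuedness in (A2); this identifies the target element $v_m^{k+1}\in\varGamma_{mu}(x^{k+1},z^k,u^k)$.

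For the inductive step at level $m$ I substitute the decompositions of (A7) and the already-established relation $u_{m+1}^{k+1}-u_{m+1}^k=\tau_k\big(v_{m+1}^{k+1}+\theta_{m+1}^{k+1}+\alpha_{m+1}^{k+1}\big)$ into \eqref{def_ukm}, divide by $\tau_k$, and subtract $v_m^{k+1}$. The residual splits in two. Into $\theta_m^{k+1}$ I collect the terms carrying a factor $E_m^{k+1}$ or $e_m^{k+1}$, or $\theta_{m+1}^{k+1}$: namely $E_{mx}^{k+1}(\bar{y}(x^k,z^k)-x^k)$, $b\,e_m^{k+1}$, and the products $J_{mu}^{k+1}\theta_{m+1}^{k+1}$, $\Delta_{mu}^{k+1}\theta_{m+1}^{k+1}$, $E_{mu}^{k+1}\theta_{m+1}^{k+1}$, $E_{mu}^{k+1}\alpha_{m+1}^{k+1}$. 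Each has conditional mean zero: the $\Fc_k$-measurable multipliers pull out and $\Eb[E_m^{k+1}|\Fc_k]=\Eb[e_m^{k+1}|\Fc_k]=0$, while for the genuinely quadratic term $E_{mu}^{k+1}\theta_{m+1}^{k+1}$ the vanishing of the conditional mean is exactly what (A8) delivers, since $\theta_{m+1}^{k+1}$ is a function of $\{E_\ell^{k+1},e_\ell^{k+1}\}_{\ell\ge m+1}$, conditionally independent of $E_{mu}^{k+1}$ given $\Fc_k$. The same independence, with (A7), bounds $\Eb[\|\theta_m^{k+1}\|^2|\Fc_k]$ by a constant built from $\sigma_E^2,\sigma_e^2,C_{m+1}^\theta$, $\operatorname{diam}X$, and the sizes of $J_{mu}^{k+1},\Delta_{mu}^{k+1},\alpha_{m+1}^{k+1}$. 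Into $\alpha_m^{k+1}$ go the remaining, purely $\Fc_k$-measurable terms that vanish: $J_{mx}^{k+1}\big[(\bar{y}(x^k,z^k)-x^k)-(\bar{y}(x^{k+1},z^k)-x^{k+1})\big]$, which is $O(\tau_k)$ because $\bar{y}(\cdot,z^k)=\Pi_X(\cdot-z^k/\rho)$ is nonexpansive and $\|x^{k+1}-x^k\|\le\tau_k\operatorname{diam}X$, together with $\Delta_{mx}^{k+1}(\bar{y}(x^k,z^k)-x^k)$, $b\,\delta_m^{k+1}$, $J_{mu}^{k+1}\alpha_{m+1}^{k+1}$, and $\Delta_{mu}^{k+1}\alpha_{m+1}^{k+1}$; all tend to $0$ a.s. by (A6)--(A7) and the hypothesis $\alpha_{m+1}^{k+1}\to0$. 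Carrying no $E$- or $e$-factor, $\alpha_m^{k+1}$ stays $\Fc_k$-measurable, which is precisely what makes $E_{mu}^{k+1}\alpha_{m+1}^{k+1}$ mean-zero and preserves the induction invariant.

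Boundedness then follows from \eqref{u-abstract}. Writing out $v_m^{k+1}$ and using $\tau_k\le1/b$ gives the convex-combination form
\[
u_m^{k+1}=(1-b\tau_k)u_m^k+b\tau_k f_m(x^{k+1},u_{m+1}^k)+\tau_k\beta_m^{k+1}+\tau_k\theta_m^{k+1},
\]
where $\beta_m^{k+1}=D_{mx}(\bar{y}(x^{k+1},z^k)-x^{k+1})+D_{mu}v_{m+1}^{k+1}+\alpha_m^{k+1}$ is $\Fc_k$-measurable. I argue again by backward induction on $m$. For $m=M$ the forcing $f_M(x^{k+1})$ is bounded on the compact $X$ and $D_M,\bar{y}-x^{k+1}$ are bounded, so $\|\beta_M^{k+1}\|\le B$; for $m<M$ the inductive boundedness of $u_{m+1}^k$ renders $f_m(x^{k+1},u_{m+1}^k)$, $D_m$ (via (A4)) and $v_{m+1}^{k+1}$ bounded, hence $\|\beta_m^{k+1}\|\le B$. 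Taking $V_k=\|u_m^k\|^2$, the martingale cross-term vanishes under $\Eb[\cdot|\Fc_k]$ and, using $(1-b\tau_k)^2\le1-b\tau_k$, one reaches $\Eb[V_{k+1}|\Fc_k]\le V_k-b\tau_k(\|u_m^k\|-B/b)^2+\tau_k B^2/b+\tau_k^2(B^2+C_m^\theta)$. The restoring drift dominates the $O(\tau_k)$ forcing once $\|u_m^k\|$ exceeds a fixed radius, so outside a ball $V_k$ is a nonnegative almost-supermartingale with a.s. summable increments ($\sum_k\tau_k^2<\infty$); the stability (Robbins--Siegmund-type) argument then yields $\sup_k\|u_m^k\|<\infty$ a.s.

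The main obstacle is the circularity lurking in the second paragraph: by (A4) the bound on $\Eb[\|\theta_m^{k+1}\|^2|\Fc_k]$, through $\|J_{mu}^{k+1}\|\le C(1+\|u_{m+1}^k\|)$ and through $\alpha_{m+1}^{k+1}$, depends on the magnitude of $u_{m+1}^k$, which is only known to be bounded a.s. after boundedness is proved. I would break this by running the two inductions jointly, one level $m$ at a time, so that $u_{m+1}^k,\dots,u_M^k$ are already bounded a.s. when level $m$ is treated; and to keep $C_m^\theta$ genuinely deterministic I would localize with the stopping times $\nu_R=\inf\{k:\ \|u^k\|>R\}$, establish \eqref{u-abstract}--\eqref{alpha-abstract} and the supermartingale estimate for the stopped sequences $u^{k\wedge\nu_R}$ (on which every multiplier is bounded by a deterministic constant depending on $R$), show that the resulting bound forces $\nu_R\uparrow\infty$ a.s., and finally let $R\to\infty$.
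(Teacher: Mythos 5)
Your decomposition argument (first two paragraphs) is essentially the paper's own proof: the same backward induction, the same choice $D_m:=J_m^{k+1}$ to identify the element of $\varGamma_{mu}$, the same sorting into a conditionally mean-zero part $\theta_m^{k+1}$ (with (A8) killing the cross term $E_{mu}^{k+1}\theta_{m+1}^{k+1}$) and a vanishing part $\alpha_m^{k+1}$. Indeed you are more careful than the paper on one point: the mismatch between $\bar y(x^k,z^k)-x^k$ appearing in the update and $\bar y(x^{k+1},z^k)-x^{k+1}$ required by $\varGamma_{mu}(x^{k+1},z^k,u^k)$, which you correctly fold into $\alpha_m^{k+1}$ via nonexpansiveness of the projection; the paper passes over this silently. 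Your ``joint induction'' cure for the circularity is also exactly what the paper does: its induction hypothesis at level $m+1$ comprises \eqref{u-abstract}--\eqref{alpha-abstract}, a.s.\ boundedness, and square integrability simultaneously, all re-established at level $m$. (Minor slip: your term lists omit $E_{mu}^{k+1}v_{m+1}^{k}$ and $\Delta_{mu}^{k+1}v_{m+1}^{k}$, though your stated sorting rule covers them.)

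The genuine gap is in the boundedness step. Your inequality
$\Eb[V_{k+1}\,|\,\Fc_k]\le V_k-b\tau_k\big(\|u_m^k\|-B/b\big)^2+\tau_k B^2/b+\tau_k^2\big(B^2+C_m^\theta\big)$
is correct, but it is not of Robbins--Siegmund type: the forcing term $\tau_k B^2/b$ is \emph{not} summable (by (A6), $\sum_k\tau_k=\infty$), and it is dominated by the drift only while $\|u_m^k\|\ge 2B/b$. A supermartingale ``outside a ball'' does not deliver a.s.\ boundedness by any standard theorem: each excursion above the ball can be controlled by a maximal inequality, but there may be infinitely many excursions and the per-excursion bounds do not sum; and every attempt to glue the two regimes through $(V_k-c)_+$ or $\max(V_k,c)$ reintroduces conditional first-moment noise contributions of size $O(\tau_k\|u_m^k\|)$, again non-summable, because taking positive parts destroys the mean-zero cancellation. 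That cancellation lives in the martingale sums themselves, and this is precisely what the paper exploits: it defines $\tilde u_m^k=u_m^k+\sum_{j\ge k}\tau_j\theta_m^{j+1}$ as in \eqref{tildeu}, notes that the tail tends to $0$ a.s.\ by martingale convergence (using (A6) and \eqref{theta-abstract}), and observes that $\tilde u_m^k$ then satisfies the recursion \eqref{tildeu-simple}, a deterministic contraction with factor $(1-b\tau_k)$ driven by bounded plus vanishing inputs, whence $\limsup_k\|u_m^k\|=\limsup_k\|\tilde u_m^k\|<\infty$ at once. Your stopping-time localization does not remove the obstruction and adds another: the constants of the stopped process grow with the truncation radius $R$ (through (A4), $C_m^\theta(R)=O(R^2)$), so a Chebyshev-type bound on $P(\nu_R<\infty)$ of order $C(R)/R^2$ need not tend to zero, and ``$\nu_R\uparrow\infty$'' is asserted rather than forced. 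Replacing your third paragraph by the paper's tail-shift argument makes the proof complete.
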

\begin{proof}
For $m=M$, formula \eqref{def_ukM} and assumption (A7) yield:
\begin{equation}
\label{ukM}
u_M^{k+1}  =  u_M^k + \tau_k v_M^k + \tau_k \theta_M^{k+1} + \tau_k \alpha_M^{k+1},
\end{equation}
with
\begin{gather*}
v_M^k =  J_M^{k+1} \big(\bar{y}(x^k,z^k)-x^k\big) + b \big( f_M(x^{k+1})- u_M^k\big),\\
\theta_M^{k+1} =  E_M^{k+1}\big(\bar{y}(x^k,z^k)-x^k\big) + b e_M^{k+1},\\
\alpha_M^{k+1} = \Delta_M^{k+1} \big(\bar{y}(x^k,z^k)-x^k\big) + b \delta_M^{k+1}.
\end{gather*}
Due to (A1) and (A7), relations \eqref{u-abstract}--\eqref{alpha-abstract} are true for coordinate $m=M$.

To verify the boundedness of $\{u_M^k\}$, we define the quantities
\begin{equation}
\label{tildeu}
\tilde{u}_M^k = u_M^k +  \sum_{j=k}^\infty \tau_j \theta_M^{j+1}.
\end{equation}
Owing to (A6) and  \eqref{theta-abstract},  by virtue of the martingale convergence theorem,
the series in the formula above is convergent a.s., and thus $\tilde{u}_M^k - u_M^k \to 0$ a.s., when $k\to \infty$.
We can now use \eqref{ukM} to establish the following recursive relation:
\begin{equation}
\label{tildeu-simple}
\tilde{u}_M^{k+1} = (1-b\tau_k) \tilde{u}_M^k + \tau_k J_M^{k+1} \big(\bar{y}(x^k,z^k)-x^k\big)
+ b \tau_k f_M(x^{k+1})+ \tau_k \alpha_M^{k+1} \\
 + b\tau_k (\tilde{u}_M^k -u_M^k).
\end{equation}
By (A1), the sequences $\{J_M^k\}$ and $\{f_M(x^k)\}$ are bounded, and thus
\[
\limsup_{k\to \infty} \big\| {u}_M^k \big\| = \limsup_{k\to \infty} \big\| \tilde{u}_M^k \big\|
\le \limsup_{k\to \infty} \big\| J_M^{k+1} \big(\bar{y}(x^k,z^k)-x^k\big)
+ b  f_M(x^{k+1}) \big\| < \infty.
\]
Furthermore, it follows from \eqref{tildeu} and \eqref{tildeu-simple} for $M$ that
 a constant $C_u$ exists, such that \break $\Eb\big[ \|u_m^k\|^2\| \big] \le C_u$.

We now proceed by induction. Suppose the relations \eqref{u-abstract}-\eqref{alpha-abstract} are true for $m+1$,
the sequence $\{u^k_{m+1}\}$  is bounded a.s., and
$\Eb\big[ \|u_{m+1}^k\|^2 \big]$  is bounded as well.  We shall verify these properties
for $m$. From \eqref{def_ukm} for $m$ and \eqref{u-abstract} for $m+1$ we obtain
\[
u_m^{k+1} = u_m^k + \tau_k \tilde{J}_m^{\,k+1} \begin{bmatrix}  \bar{y}(x^k,z^k)-x^k \\v_{m+1}^{k} + \theta_{m+1}^{k+1} + \alpha_{m+1}^{k+1}\end{bmatrix}
+ b \tau_k \big(\tilde{h}_m^{k+1}-u_m^k\big).
\]
This can be rewritten as follows:
\[
u_m^{k+1} =  u_m^k +  \tau_k v^k_m + \tau_k \theta_M^{k+1} + \tau_k \alpha_M^{k+1},
\]
with
\begin{gather}
v_m^k =  J_m^{k+1} \begin{bmatrix} \bar{y}(x^k,z^k)-x^k \\ v^k_{m+1} \end{bmatrix} + b\big(  f_m(x^{k+1},u^k_{m+1}) - u^k_m\big), \label{uk-ind}\\
\theta_m^{k+1} = E_m^{k+1}\begin{bmatrix} \bar{y}(x^k,z^k)-x^k \\ v^k_{m+1} \end{bmatrix}
+ \big(J_{mu}^{k+1} + E_{mu}^{k+1}\big) \theta_{m+1}^{k+1}
 + E_{mu}^{k+1} \alpha_{m+1}^{k+1} + b e_m^{k+1},
\label{theta-ind}\\
\alpha_m^{k+1} = J_{mu}^{k+1} \alpha_{m+1}^{k+1}
+ \Delta_m^{k+1}\begin{bmatrix} \bar{y}(x^k,z^k)-x^k \\ v^k_{m+1} +\alpha_{m+1}^{k+1} \end{bmatrix}  + b \delta_m^{k+1}. \label{alpha-ind}
\end{gather}
As the sequences $\{x^k\}$ and $\{u_{m+1}^k\}$ are bounded a.s., the Jacobians $J_m^{k+1}$  are bounded as well. Moreover, by assumption (A5), $\Eb\big[\| J_m^{k+1}\|^2\big]$ is bounded for all $k$.

To verify \eqref{theta-abstract}--\eqref{alpha-abstract},
we only need to analyze the effect of various product terms in the formulae above. The product
$J_{mu}^{k+1} v^k_{m+1}$ in \eqref{uk-ind} is square integrable due to (A4). The same is true
for $f_m(x^{k+1},u^k_{m+1})$. The conditional expectation $\Eb\big[\theta_m^{k+1}\,\big|\,\Fc_k\big] = 0 $, thanks to
assumption (A8), because $\theta^{k+1}_{m+1}$ depends only on observations of quantities associated with the functions $f_\ell(\cdot,\cdot)$,
$\ell=m+1,\dots,M$. Furthermore, $\Eb\big[\|\theta_m^{k+1}\|^2\,\big|\,\Fc_k\big]$ is bounded, due to (A4), (A7), and (A8).
Consequently, we can define a sequence $\{\tilde{u}_m^k\}$ is a way analogous to \eqref{tildeu},
and establish for it a recursive relation of the form \eqref{tildeu-simple}, with $M$ replaced by $m$. In the same way as above, we
obtain the boundedness of $\{{u}_m^k\}$ with probability 1, and its square integrability.
By induction, the assertion is true for all $m$.
\end{proof}

We now pass to the analysis of the sequence $\{z^k\}$. Carrying out \eqref{g-fold} for exact subgradients, we would obtain
\begin{equation}
\label{g-fold-exact}
{g}_M^{k+1} = {J}_M^{k+1}, \quad {g}_{m}^{k+1} = J_{mx}^{k+1}+J_{mu}^{k+1}{g}_{m+1}^{k+1}, \quad m=M-1,\dots,1.
\end{equation}
Evidently, $a \big(\big[g_1^{k+1}\big]^T-z^k\big) \in \varGamma_{x}\big(x^{k+1},z^k,u^k\big)$.

\begin{lemma}
\label{l:zk-recursion}
If conditions {\rm (A1)} and {\rm (A6)--(A8)} are satisfied, then for all $k=0,1,\dots$ and all $m=1,\dots,M$,
\begin{equation}
\label{gradient-error}
\tilde{g}_m^{k+1} = g_m^{k+1} + \varkappa_m^{k+1} + \beta_m^{k+1}, \quad m=1,\dots,M,
\end{equation}
where, for some constant $C_\varkappa$,
\begin{equation}
\label{gradient-error-2}
\Eb\big[\varkappa_m^{k+1} \,\big|\, \Fc_k\big] = 0, \quad \Eb\big[\|\varkappa_m^{k+1}\|^2 \,\big|\, \Fc_k\big] \le C_\varkappa, \quad \lim_{k\to \infty} \beta_m^{k+1} = 0.
\end{equation}
\end{lemma}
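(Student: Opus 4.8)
The plan is to prove the decomposition \eqref{gradient-error} by \emph{backward induction on} $m$, from $m=M$ down to $m=1$, strengthening the inductive hypothesis to also record that $\varkappa_m^{k+1}$ is a measurable function of the Jacobian estimation errors $E_\ell^{k+1}$ at levels $\ell\ge m$, and that $\beta_m^{k+1}$ is $\Fc_k$-measurable (besides tending to $0$ a.s.). The $\Fc_k$-measurability of the bias part is what keeps the induction self-sustaining: since $\Eb[E_m^{k+1}\mid\Fc_k]=0$ and $J_m^{k+1}$ is the selected Jacobian at the $\Fc_k$-measurable point $(x^{k+1},u_{m+1}^k)$, the bias $\Delta_m^{k+1}=\Eb[\tilde J_m^{k+1}\mid\Fc_k]-J_m^{k+1}$ is itself $\Fc_k$-measurable. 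Before the induction I would record the boundedness facts used repeatedly: $x^k\in X$ is bounded by (A1) (each $x^{k+1}$ is a convex combination of points of $X$ by \eqref{def_xk}, since $\tau_k\le 1$), $\{u^k\}$ is bounded a.s. and square-integrable by Lemma \ref{l:uk-bounded}, and hence, by the growth condition (A4), the true Jacobians $J_m^{k+1}$ and the exactly folded gradients $g_m^{k+1}$ of \eqref{g-fold-exact} are bounded and $\Fc_k$-measurable.

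The base case $m=M$ is immediate: $\tilde g_M^{k+1}=\tilde J_M^{k+1}$, so (A7)(ii) gives \eqref{gradient-error} with $\varkappa_M^{k+1}=E_M^{k+1}$ and $\beta_M^{k+1}=\Delta_M^{k+1}$. For the inductive step I would insert the block splitting $\tilde J_m^{k+1}=J_m^{k+1}+E_m^{k+1}+\Delta_m^{k+1}$ from (A7)(ii) and the inductive form of $\tilde g_{m+1}^{k+1}$ into the recursion $\tilde g_m^{k+1}=\tilde J_{mx}^{k+1}+\tilde J_{mu}^{k+1}\tilde g_{m+1}^{k+1}$ and expand. The leading term $J_{mx}^{k+1}+J_{mu}^{k+1}g_{m+1}^{k+1}$ is precisely $g_m^{k+1}$, and I would sort the remaining ten products by type: a product of an $\Fc_k$-measurable bounded factor (one of $J_{mu}^{k+1}$, $g_{m+1}^{k+1}$, $\beta_{m+1}^{k+1}$) with a conditionally zero-mean factor ($E_{mx}^{k+1}$, $E_{mu}^{k+1}$, or $\varkappa_{m+1}^{k+1}$) is placed in $\varkappa_m^{k+1}$, whereas a product of two $\Fc_k$-measurable factors at least one of which vanishes (those carrying a $\Delta_{mx}^{k+1}$, $\Delta_{mu}^{k+1}$, or $\beta_{m+1}^{k+1}$ against a bounded factor) is placed in $\beta_m^{k+1}$. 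For the first group the conditional mean vanishes by pulling out the $\Fc_k$-measurable factor and using $\Eb[E_m^{k+1}\mid\Fc_k]=0$ (or $\Eb[\varkappa_{m+1}^{k+1}\mid\Fc_k]=0$), and the conditional second moment is bounded through $\Eb[\|E_m^{k+1}\|^2\mid\Fc_k]\le\sigma_E^2$, the inductive constant $C_\varkappa$, and the boundedness from the previous paragraph; the terms in $\beta_m^{k+1}$ tend to $0$ because $\Delta_m^{k+1}\to0$ and $\beta_{m+1}^{k+1}\to0$ while their cofactors stay bounded. A uniform constant is obtained as the maximum of the finitely many level-wise second-moment bounds.

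The one genuinely delicate term, and the pivot of the whole construction, is the mixed-level product $E_{mu}^{k+1}\varkappa_{m+1}^{k+1}$: both factors are random and only square-integrable, so neither its conditional mean nor its conditional second moment can be controlled by boundedness. This is exactly the role of assumption (A8). By the strengthened induction hypothesis, $\varkappa_{m+1}^{k+1}$ is (given $\Fc_k$) a function solely of the deeper-level errors $E_\ell^{k+1}$, $\ell\ge m+1$, from which (A8) renders $E_{mu}^{k+1}$ conditionally independent; hence $\Eb[E_{mu}^{k+1}\varkappa_{m+1}^{k+1}\mid\Fc_k]=\Eb[E_{mu}^{k+1}\mid\Fc_k]\,\Eb[\varkappa_{m+1}^{k+1}\mid\Fc_k]=0$, and the same factorization yields $\Eb[\|E_{mu}^{k+1}\varkappa_{m+1}^{k+1}\|^2\mid\Fc_k]\le\sigma_E^2\,C_\varkappa$. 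I expect this conditional-independence step, together with the bookkeeping that $\varkappa_{m+1}^{k+1}$ genuinely involves only deeper-level noise so that (A8) is applicable at every stage of the backward recursion, to be the main obstacle; the rest is the finite and routine manipulation of the ten product terms, resting on Lemma \ref{l:uk-bounded} and (A4).
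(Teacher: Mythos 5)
Your proposal is correct and follows essentially the same route as the paper: backward induction on $m$, splitting each level's error into a conditionally zero-mean part and a vanishing bias part, with assumption (A8) invoked precisely for the cross term $E_{mu}^{k+1}\varkappa_{m+1}^{k+1}$ to get $\Eb\big[E_{mu}^{k+1}\varkappa_{m+1}^{k+1}\,\big|\,\Fc_k\big]=0$ and the second-moment bound $C_\varkappa\sigma_E^2$. Your strengthened induction hypothesis (that $\varkappa_{m+1}^{k+1}$ involves only deeper-level noise and that the bias terms are $\Fc_k$-measurable) merely makes explicit the bookkeeping the paper leaves implicit, so no substantive difference.
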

\begin{proof}
Formulae \eqref{gradient-error}--\eqref{gradient-error-2} are true for $m=M$ directly by (A7)(ii). Supposing that they are
true for $m+1$, we verify them for $m$. We have
\begin{multline}
\label{gradient-error-3}
\tilde{g}_m^{k+1} - g_m^{k+1} =
 \tilde{J}_{mx}^{k+1} - J_{mx}^{k+1}
 +\tilde{J}_{mu}^{k+1}\tilde{g}_{m+1}^{k+1}
-J_{mu}^{k+1} {g}_{m+1}^{k+1}\\
 =
 \tilde{J}_{mx}^{k+1} - J_{mx}^{k+1}
 +\big(\tilde{J}_{mu}^{k+1} -J_{mu}^{k+1} \big)\tilde{g}_{m+1}^{k+1}
+ J_{mu}^{k+1} \big( \tilde{g}_{m+1}^{k+1} -{g}_{m+1}^{k+1} \big).
\end{multline}
The first term in \eqref{gradient-error-3}, $\tilde{J}_{mx}^{k+1} - {J}_{mx}^{k+1}$, admits decomposition of form
\eqref{gradient-error}--\eqref{gradient-error-2} directly by (A7)(ii). The second term, by (A7)(ii),  can be represented as follows:
\[
\big(\tilde{J}_{mu}^{k+1} -{J}_{mu}^{k+1}\big) \tilde{g}_{m+1}^{k+1} =
\big({E}_{mu}^{k+1} + \Delta_{mu}^{k+1}\big) \big({g}_{m+1}^{k+1} + \varkappa_{m+1}^{k+1} + \beta_{m+1}^{k+1}\big).
\]
Owing to (A8),
\[
\Eb \big\{  {E}_{mu}^{k+1} \varkappa_{m+1}^{k+1}\,\big|\, \Fc_k\big\} =0,\quad
\Eb \big\{  \big\|{E}_{mu}^{k+1}  \varkappa_{m+1}^{k+1}\big\|^2 \,\big|\, \Fc_k\big\} \le C_\varkappa\sigma_E^2.
\]
Together with the boundedness of $\{x^k\}$ and $\{u^k\}$, this implies that the second term admits  decomposition od form \eqref{gradient-error}--\eqref{gradient-error-2}.
The third term in \eqref{gradient-error-3}, by  \eqref{gradient-error},  can be represented as follows:
\[
 {J}_{mu}^{k+1} \big( \tilde{g}_{m+1}^{k+1} -{g}_{m+1}^{k+1} \big)
 =   {J}_{mu}^{k+1} \big( \varkappa_{m+1}^{k+1} + \beta_{m+1}^{k+1}\big).
\]
Together with the boundedness of $\{J_m^k\}$, this implies that the third term admits  decomposition od form \eqref{gradient-error}--\eqref{gradient-error-2}
as well. Therefore, \eqref{gradient-error}--\eqref{gradient-error-2} are true for all $m$.
\end{proof}

We can now establish the boundedness of the sequence $\{z^k\}$.

\begin{lemma}
\label{l:zk-bounded}
If conditions {\rm (A1)} and {\rm (A6)--(A8)} are satisfied, then
with probability~1 the sequence  $\{z^k\}$ is bounded.
\end{lemma}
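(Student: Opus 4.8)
The plan is to treat the update \eqref{def_zk} for $\{z^k\}$ as a stochastic approximation recursion with a bounded drift, a martingale-difference noise, and a vanishing bias, and then to reuse the noise-absorbing device already employed in the proof of Lemma \ref{l:uk-bounded}. Substituting the decomposition $\tilde{g}_1^{k+1} = g_1^{k+1} + \varkappa_1^{k+1} + \beta_1^{k+1}$ from Lemma \ref{l:zk-recursion} into \eqref{def_zk} and transposing, I would first rewrite the iteration in the contraction form
\[
z^{k+1} = (1-a\tau_k)\,z^k + a\tau_k\Big(\big[g_1^{k+1}\big]^T + \big[\varkappa_1^{k+1}\big]^T + \big[\beta_1^{k+1}\big]^T\Big).
\]
The factor $1-a\tau_k$ lies in $[0,1)$ because (A6) guarantees $a\tau_k \le 1$, so each step is a genuine convex combination of the previous iterate and the ``target'' in the bracket. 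The entire argument then reduces to showing that this target is bounded a.s.

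The key step is the boundedness of the drift $g_1^{k+1}$. Recall that $g_1^{k+1}$ is obtained by the backward folding \eqref{g-fold-exact} of the Jacobians $J_m^{k+1} \in \partial f_m(x^{k+1},u_{m+1}^k)$ and $J_M^{k+1}\in\partial f_M(x^{k+1})$. Since $x^{k+1}\in X$ with $X$ compact by (A1), and since $\{u^k\}$ is bounded a.s. by Lemma \ref{l:uk-bounded}, the growth condition (A4) bounds each factor $\|J_m^{k+1}\|$ a.s. by a quantity depending only on the (a.s. finite) bound on $\{u^k\}$. The folding in \eqref{g-fold-exact} is a finite sum of products of these factors, so $g_1^{k+1}$ is bounded a.s. The bias $\beta_1^{k+1}\to 0$ by Lemma \ref{l:zk-recursion}, hence is eventually bounded as well.

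It remains to dispose of the martingale term. I would set $\tilde{z}^k = z^k + \sum_{j=k}^\infty a\tau_j[\varkappa_1^{j+1}]^T$ in analogy with \eqref{tildeu}; by (A6) and the bound $\Eb[\|\varkappa_1^{k+1}\|^2|\Fc_k]\le C_\varkappa$ from \eqref{gradient-error-2}, the martingale convergence theorem makes the series converge a.s., so the tail $S_k = \sum_{j=k}^\infty a\tau_j[\varkappa_1^{j+1}]^T\to 0$ and $\tilde{z}^k - z^k\to 0$ a.s. A direct computation then yields the noise-free recursion
\[
\tilde{z}^{k+1} = (1-a\tau_k)\,\tilde{z}^k + a\tau_k\Big(\big[g_1^{k+1}\big]^T + \big[\beta_1^{k+1}\big]^T + S_k\Big),
\]
whose target is bounded a.s. by the previous paragraph together with $S_k\to 0$. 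Taking $\limsup$ exactly as in Lemma \ref{l:uk-bounded} gives $\limsup_k\|\tilde{z}^k\| \le \limsup_k\|[g_1^{k+1}]^T + [\beta_1^{k+1}]^T + S_k\| < \infty$, and since $\tilde{z}^k - z^k\to 0$, the sequence $\{z^k\}$ is bounded a.s. I expect the only real obstacle to be the first paragraph's bookkeeping—keeping the transposes, the convex-combination constant $1-a\tau_k$, and the three error types straight—while the crux of substance is the a.s. boundedness of the Jacobian product $g_1^{k+1}$, which hinges entirely on having Lemma \ref{l:uk-bounded} and the growth bound (A4) in hand.
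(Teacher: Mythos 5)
Your proof is correct and takes essentially the same route as the paper's: the same compensated sequence $\tilde{z}^k = z^k + a\sum_{j\ge k}\tau_j\big[\varkappa_1^{j+1}\big]^T$ justified by the martingale convergence theorem and \eqref{gradient-error-2}, and the same contraction recursion (your display for $\tilde{z}^{k+1}$ is algebraically identical to the paper's, with $S_k=\tilde{z}^k-z^k$), followed by the same $\limsup$ comparison. The only difference is cosmetic: you spell out the a.s.\ boundedness of $g_1^{k+1}$ via (A1), Lemma \ref{l:uk-bounded}, and the growth bound on the Jacobians, which the paper asserts without elaboration.
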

\begin{proof}
We proceed as in the proof of lemma \ref{l:uk-bounded}. We define the quantities
\[
\tilde{z}^k  = z^k +  a\sum_{j=k}^\infty \tau_j \big[\varkappa_1^{j+1}\big]^T, \quad k=0,1,\dots.
\]
Due to Lemma \ref{l:zk-recursion},  by virtue of the martingale convergence theorem,
the series in the formula above is convergent
a.s., and thus $\tilde{z}^k - z^k \to 0$ a.s., when $k\to \infty$.
We can now use \eqref{def_zk} to establish the following recursive relation:
\[
\tilde{z}^{k+1} = (1-a\tau_k) \tilde{z}^k
+  a\tau_k \big[g_1^{k+1}\big]^T +  a\tau_k(\big[\beta_1^{k+1}\big]^T+\tilde{z}^k -z^k).
\]
Therefore,
\[
\limsup_{k\to \infty} \big\| {z}^k \big\| = \limsup_{k\to \infty} \big\| \tilde{z}^k \big\| \le \limsup_{k\to \infty} \big\| g_1^k \big\| < \infty\quad \text{a.s.},
\]
as claimed.
\end{proof}

\section{Convergence analysis}
\label{s:analysis}


We start from a useful property of the gap function {$\eta:X\times \Rb^n\to(-\infty,0]$},
\begin{equation}
\label{gap}
\eta(x,z) = \min_{y\in X} \left\{\langle z, y-x \rangle + \frac{\rho}{2} \|y-x\|^2\right\}.
\end{equation}
We denote the minimizer in \eqref{gap} by $\bar{y}(x,z)$. Since it is a projection of $x-z/\rho$ on $X$,
\begin{equation}
\label{opt-eta}
 \langle z ,\bar{y}(x,z)-x \rangle  + \rho \| \bar{y}(x,z)-x \|^2 \le 0.
\end{equation}
Moreover, a point $x^*\in X^*$ if and only if $z^*\in G_1(x^*)$ exists
 such that $\eta(x^*,z^*)=0$.

We can now state the main result of the paper.

\begin{theorem}
\label{t:convergence}
If the assumptions {\rm (A1)--(A8)} are satisfied,
then with probability 1 every accumulation point $\hat{x}$ of the sequence $\{x^k\}$ is stationary, $\lim_{k\to\infty} (u_m^k-F_m(x^k))=0$,
for $m=1,\dots,M$, and the sequence $\{F_1(x^k)\}$ is convergent.
\end{theorem}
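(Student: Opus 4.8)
The plan is to argue by the differential-inclusion (ODE) method. Lemmas~\ref{l:uk-bounded}, \ref{l:zk-recursion} and \ref{l:zk-bounded} already recast the joint iteration for $w^k=(x^k,z^k,u^k)$ in the standard Robbins--Monro form
\[
w^{k+1} \in w^k + \tau_k\,\widehat{\varGamma}(w^k) + \tau_k\,\zeta^{k+1} + \tau_k\,\xi^{k+1},
\]
where the set-valued drift $\widehat{\varGamma}$ stacks the exact dynamics $\bar y(x,z)-x$ of the $x$-block on top of $\varGamma(x,z,u)$ (which governs the $(z,u)$-block), the term $\zeta^{k+1}$ collects the conditionally mean-zero, square-integrable martingale parts $\theta_m^{k+1},\varkappa_m^{k+1}$, and $\xi^{k+1}\to0$ collects the asymptotically vanishing parts $\alpha_m^{k+1},\beta_m^{k+1}$. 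Since $X$ is compact by (A1) and $\{z^k\},\{u^k\}$ are bounded a.s.\ by the cited lemmas, the whole sequence $\{w^k\}$ is bounded. Under (A6), the approximation theorem of Bena\"im--Hofbauer--Sorin then shows that the piecewise-linear interpolation of $\{w^k\}$ is a.s.\ an asymptotic pseudotrajectory of the inclusion $\dot w\in\widehat{\varGamma}(w)$, so its limit set $L$ is a.s.\ nonempty, compact, connected, and internally chain transitive (ICT) for the associated flow.

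The core is a nonsmooth Lyapunov function. I would take
\[
V(x,z,u) = F_1(x) + \sum_{m=1}^{M}\tfrac{c_m}{2}\,\|u_m-F_m(x)\|^2 + \tfrac{c_0}{2}\,P(x,z),
\]
where $P$ penalizes the discrepancy between $z$ and the generalized gradient $G_1(x)$ it is meant to track, and $c_0,\dots,c_M>0$ are weights to be fixed. Along a trajectory of $\dot w\in\widehat\varGamma(w)$ the path $x(\cdot)$ is in fact $C^1$ (because $\dot x=\bar y(x,z)-x$ with $\bar y$ Lipschitz and $z(\cdot)$ absolutely continuous), so the chain rule on a path guaranteed by (A2)--(A3) applies to each $F_m$ and lets me differentiate $V$. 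The decisive feature is that the chain rule holds for \emph{every} selection of the generalized Jacobian, so the selections $D_m$ realizing the drift $\varGamma$ may be reused when evaluating $\tfrac{d}{dt}F_m(x(t))$; then the Jacobian contributions in the derivative of $\tfrac12\|u_m-F_m(x)\|^2$ nearly cancel, leaving the filtering dissipation $-b\|u_m-F_m(x)\|^2$ plus cross terms carried down from level $m+1$, and the $z$-penalty dissipates at rate $-a$ likewise. For the leading term the chain rule gives $\tfrac{d}{dt}F_1(x)=g_1\cdot(\bar y(x,z)-x)$ for a selection $g_1$, which by the optimality inequality \eqref{opt-eta} is bounded above by $\eta(x,z)$ plus the cross term $\langle g_1-z,\,\bar y(x,z)-x\rangle$ measuring the subgradient-tracking error. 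Choosing $c_0,\dots,c_M$ so that all cross terms are dominated by the negative diagonal dissipation, I would obtain $\dot V\le\tfrac12\,\eta(x,z)-(\text{positive combination of tracking errors})\le 0$, with equality only where $\eta(x,z)=0$ and every tracking error vanishes.

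Let $\Lambda$ denote that equality set: there $u_m=F_m(x)$, $z\in G_1(x)$, and $\eta(x,z)=0$, which by the characterization following \eqref{opt-eta} is exactly $0\in G_1(x)+N_X(x)$, i.e.\ $x\in X^*$. Thus $V$ is a strict Lyapunov function for $\Lambda$, and $V(\Lambda)=\{F_1(x):x\in X^*\}=F_1(X^*)$. Assumption (A5) says this set contains no interval, hence has empty interior, so the Bena\"im--Hofbauer--Sorin invariance principle yields $L\subseteq\Lambda$ with $V$ constant on $L$. Transporting back to the iterates: a.s.\ every accumulation point $\hat x$ of $\{x^k\}$ lies in $X^*$, the tracking errors $u_m^k-F_m(x^k)\to0$ for all $m$, and since $V(w^k)\to V|_L\equiv c$ while the tracking terms vanish, $F_1(x^k)=V(w^k)-(\text{tracking terms})\to c$, giving convergence of $\{F_1(x^k)\}$.

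The main obstacle is the construction of $V$ and the verification of $\dot V\le0$, precisely because the method is \emph{single} time-scale: $x$, $z$, and all inner estimates $u_m$ evolve on the common clock $\tau_k$, so one cannot argue sequentially that $u$ equilibrates, then $z$, then $x$. The Lyapunov function must dissipate every error simultaneously, and the error derivatives form a triangular cascade, with the level-$(m{+}1)$ discrepancy entering the level-$m$ computation both through $f_m(x,u_{m+1})-f_m(x,F_{m+1}(x))$ and, most delicately, through the mismatch between $\partial_u f_m(x,u_{m+1})$ and $\partial_u f_m(x,F_{m+1}(x))$; here the single-valued $u$-Jacobian of (A2) is exactly what makes the $u$-Jacobians matchable, and the linear-growth bound (A4) controls the resulting products. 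Finding one admissible tuning of the weights $c_0,\dots,c_M$ that absorbs this entire cascade into the $-a$ and $-b$ diagonal terms, together with the measurable-selection matching in the chain rule, is the technical heart of the argument.
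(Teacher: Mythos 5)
Your overall architecture --- recasting the iteration as a perturbed differential inclusion, passing to limiting trajectories, and combining a Lyapunov function with assumption (A5) via an invariance principle --- is the same as the paper's (the paper invokes Majewski et al.\ where you invoke Bena\"im--Hofbauer--Sorin; that substitution is harmless). The genuine gap is in your Lyapunov function, and it is not a matter of tuning the weights $c_0,\dots,c_M$. Your tracking terms $\|u_m-F_m(x)\|^2$ and leading term $F_1(x)$ force you to differentiate the nested functions $F_m$ along the trajectory, which produces Jacobian selections of $\partial f_m$ at the points $(x,F_{m+1}(x))$, whereas the drift $\varGamma$ (and hence $\dt U_m(t)$) is built from selections of $\partial f_m$ at $(x,u_{m+1})$. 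Your claim that ``the selections $D_m$ realizing the drift may be reused when evaluating $\tfrac{d}{dt}F_m(x(t))$'' is therefore false: the chain rule of (A2) holds for every selection \emph{at the same base point}, not across different base points. The mismatch between $\partial_u f_m(x,u_{m+1})$ and $\partial_u f_m(x,F_{m+1}(x))$ cannot be repaired by the single-valuedness in (A2) either: a single-valued usc map is continuous but admits no modulus of continuity, so the resulting cross terms are only $o(1)$ in the tracking error and cannot be dominated by the quadratic diagonal dissipation $-b\,c_m\|u_m-F_m(x)\|^2$ (such an absorption argument is exactly what works under the Lipschitz-derivative assumption (A9) in \S\ref{s:5}, and only there). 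The paper's function \eqref{Lyapunov} avoids the issue altogether: it uses $f_1(x,u_2)$ and the \emph{first-power} discrepancies $\|f_m(x,u_{m+1})-u_m\|$, all evaluated at the estimates $u_{m+1}$, so that the same selection $\hat J_m$ appears in $\dt \varPhi_m(t)$ and $\dt U_m(t)$ and yields the exact identity $\dt \varPhi_m(t)-\dt U_m(t)=-b[\varPhi_m(t)-U_m(t)]$, hence the clean linear dissipation \eqref{norm-incr} with no Jacobian-difference terms at all.

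The second flaw is your $z$-penalty $P(x,z)$ measuring the discrepancy between $z$ and $G_1(x)$: it cannot serve as a Lyapunov term here, because $G_1(\cdot)$ is only upper semicontinuous (for nonsmooth $f_m$ it jumps), so $\dist(z,G_1(x))$ is not even continuous in $x$, and asserting that it ``dissipates at rate $-a$'' would require controlling the motion of the set $G_1(x(t))$ along the path, which is hopeless without smoothness. The paper's solution is structurally different: the continuously differentiable gap function $-\eta(x,z)$ is placed inside $W$, and its derivative along the flow (computed from $\nabla_x\eta$, $\nabla_z\eta$ and \eqref{opt-eta}) supplies precisely the terms $-a\hat g_1(t)\dt X(t)-a\rho\|\dt X(t)\|^2$ that cancel the indefinite derivative of $a f_1(x,u_2)$, leaving \eqref{W-incr}; the statement that $z$ tracks $G_1$ is then proved only in Step 3, along hypothetical trajectories with $X(t)\equiv\bar x$ frozen, where $\dist(\cdot,G_1(\bar x))$ is a convex function of $z$ alone and the classical chain rule of Br\'ezis applies. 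Without these two devices your descent inequality $\dot V\le\tfrac12\eta(x,z)-(\cdots)$ cannot be established, so the proposal as written does not go through, even though its skeleton is the right one.
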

\begin{proof}
 We consider a specific trajectory of the method and divide the proof into three standard steps.

\emph{Step 1: The Limiting Dynamical System.}  We denote by $p^k=(x^k,z^k,u^k)$, $k=0,1,2,\dots$, a realization
of the sequence generated by the algorithm.
We introduce the accumulated stepsizes
$t_k = \sum_{j=0}^{k-1}\tau_j$, $k=0,1,2 \dots$, and we construct the interpolated trajectory
\[
P_0(t) = p^k + \frac{t-t_{k}}{\tau_k}(p^{k+1}-p^k),\quad t_{k}\le t \le t_{k+1},\quad k=0,1,2,\dots.
\]
For an increasing sequence of positive numbers $\{s_k\}$ diverging to $\infty$, we define shifted trajectories $P_k(t) =P_0(t+s_k)$.
 The sequence $\{p^k\}$ is bounded by Lemmas \ref{l:uk-bounded} and  \ref{l:zk-bounded}.

By \cite[Thm. 3.2]{majewski2018analysis},  for any infinite set $\Kc$ of positive integers,
there exist an infinite  subset $\Kc_1 \subset \Kc$ and an absolutely continuous function  $P_\infty:
[0,+\infty) \to X\times \Rb^n\times \Rb^m$ such that for any $T > 0$
\[
\lim_{\substack{{k\to\infty}\\{k\in \Kc_1}}}\sup_{t\in[0,T]} \big\| P_k(t)-P_\infty(t)\big\|= 0,
\]
and $P_\infty(\cdot)=\big(X_\infty(\cdot),Z_\infty(\cdot),U_\infty(\cdot)\big)$
is a solution of the system of differential equations and inclusions corresponding to \eqref{def_xk}, \eqref{def_zk} with
\eqref{gradient-error}, and \eqref{u-abstract}:
\begin{gather}
\dt{x}(t) = \bar{y}\big(x(t),z(t)\big)-x(t), \label{dx}\\
\big(\dt{z}(t), \dt{u}(t)\big) \in \varGamma(x(t),z(t),u(t)). \label{dz}
\end{gather}
Moreover, for any $t\ge 0$, the triple $\big(X_\infty(t),Z_\infty(t),U_\infty(t)\big)$
is an accumulation point of the sequence $\{(x^k,z^k,u^k)\}$.


In order to analyze the equilibrium points of the system \eqref{dx}--\eqref{dz}, we first study the dynamics
of the functions $\varPhi_m(t) =  f_m(X(t),U_{m+1}(t))$, $m =1,\dots,M-1$, and $\varPhi_M(t) = f_M(X(t))$. { In what follows,
the equations and inequalities involving $X(t)$, $U_m(t)$. and $\varPhi_m(t)$ are understood as holding for almost all $t \ge 0$.}

It follows from \eqref{dx} that the path $X(\cdot)$ is continuously differentiable. By virtue of assumption (A3),
for any $J_M(t) \in \partial f_M(X(t))$,
\begin{equation}
\label{f-incr-M}
 \dt \varPhi_M(t)
=   J_{M}(t) \dt X(t) .
\end{equation}
Assumption (A2) means that for any $J_m(t) \in \partial f_m(X(t),U_{m+1}(t))$,
\begin{equation}
\label{f-incr-m-2}
 \dt \varPhi_m(t)
=   J_{m}(t) \begin{bmatrix} \dt X(t)  \\  \dt U_{m+1}(t) \end{bmatrix}, \quad m=1,\dots,M-1.
\end{equation}
We need to understand the dynamics of $U_m(\cdot)$. From \eqref{dz} and \eqref{Gamma-new-2} we deduce that
\begin{equation}
\label{UM}
\dt U_M(t) = \hat{J}_M(t) \dt X(t) + b [\varPhi_M(t) - U_M(t)],
\end{equation}
with some $\hat{J}_M(t) \in \partial f_M(X(t))$, and
\[
\dt U_m(t) = \hat{J}_m(t) \begin{bmatrix} \dt X(t) \\ \dt U_{m+1}(t)\end{bmatrix}  + b[ \varPhi_m(t) - U_m(t)], \quad m=1,\dots,M-1,
\]
with some $\hat{J}_m(t) \in \partial f_m(X(t),U_{m+1}(t))$.
Therefore, using $J_m(\cdot) = \hat{J}_m(\cdot)$, for $m=1,\dots,M$, in \eqref{f-incr-m-2}, we obtain
\begin{equation}
\label{Um}
\dt U_m(t) =  \dt \varPhi_m(t) + b[ \varPhi_m(t) - U_m(t)].
\end{equation}
We can verify by induction that the solution of \eqref{f-incr-m-2}--\eqref{Um} has the form:
 \begin{equation}
 \label{Phi-sol}
   \dt \varPhi_m(t) = \hat{g}_m(t)\dt X(t)
   + b \sum_{\ell=m}^{M-1} \prod_{q=m}^\ell \hat{J}_{q u}(t)\big[\varPhi_{\ell +1}(t)-U_{\ell +1}(t)\big],
   \quad m=1,\dots,M,
 \end{equation}
with $\hat{g}_m(t)$ defined by the recursive procedure:
\[
\hat{g}_M(t) =  { \hat{J}_{M}}(t), \quad \hat{g}_m(t) =  \hat{J}_{mx}(t) + \hat{J}_{mu}(t) \hat{g}_{m+1}(t),\quad m=M-1,\dots,1,
\]
and $\hat{J}_{mx}$ and $\hat{J}_{mu}$ denoting the $x$-part and the $u$-part of $\hat{J}_m$, respectively.
These observations will help us study the stability of the system.

\emph{Step 2: Descent Along a Path.}  We use the Lyapunov function
\begin{equation}
\label{Lyapunov}
W(x,z,u) = a f_1(x,u_2) -  \eta(x,z) + \sum_{m=2}^{M-1} \gamma_m\, \big\|f_m(x,u_{m+1})-u_m\big\| + \gamma_M \big\|f_M(x)-u_M\big\|,
\end{equation}
with the coefficients $\gamma_m >0$ to be specified later.
Directly from \eqref{Phi-sol} for $m=1$ we obtain
\begin{multline}
\label{f-incr2}
  f_1(X(T),U_2(T)) - f_1(X(0),U_2(0)) \\
 = \int_0^T  \hat{g}_1(t) \dt{X}(t)  \;dt
  + b  \sum_{\ell=1}^{M-1} \int_0^T \prod_{q=1}^\ell \hat{J}_{q u}(t)\big[\varPhi_{\ell +1}(t)-U_{\ell +1}(t)\big]\;dt.
\end{multline}

We now estimate the change of $\eta(X(\cdot),Z(\cdot))$ from 0 to $T$.
 Since $\bar{y}(x,z)$ is unique, the function $\eta(\cdot,\cdot)$ is continuously differentiable.
Therefore, the chain formula holds:
\begin{multline*}
\eta(X(T),Z(T)) - \eta(X(0),Z(0)) \\
= \int_0^T  \big\langle \nabla_x \eta(X(t),Z(t)), \dt{X}(t) \big\rangle \;dt +
\int_0^T  \big\langle \nabla_z \eta(X(t),Z(t)), \dt{Z}(t) \big\rangle \;dt.
\end{multline*}
From \eqref{dz} and \eqref{g-fold-exact}, we obtain { (for almost all $t\ge 0$)}
\[
\dt{Z}(t) = a\big(\hat{g}_1^T(t) - Z(t)\big),
\]
with the same $\hat{g}_1(\cdot)$ as in \eqref{Phi-sol} for $m=1$ and in \eqref{f-incr2}.

Substituting
$\nabla_x \eta(x,z) = -z+\rho(x- \bar{y}(x,z))$,
$\nabla_z \eta(x,z) =  \bar{y}(x,z)-x$,  and
using \eqref{opt-eta}, we obtain
\begin{align*}
\lefteqn{\eta(X(T),Z(T)) - \eta(X(0),Z(0))}\quad  \\
&= \int_0^T  \big\langle -Z(t)+\rho(X(t)-\bar y(X(t),Z(t)))\, , \,\bar{y}(X(t),Z(t)) - X(t) \big\rangle \;dt \\
&{\quad } +
a \int_0^T  \big\langle \bar y(X(t),Z(t))-X(t)\,,\,\hat{g}^T_1(t) - Z(t) \big\rangle \;dt\\
&\ge\; a \int_0^T  \big\langle \bar y(X(t),Z(t))-X(t)\,,\, \hat{g}_1^T(t) - Z(t) \big\rangle \;dt\\
&\ge\; a \int_0^T  \hat{g}_1(t)\big( \bar y(X(t),Z(t))-X(t)\big) \;dt
{}  + a\rho \int_0^T  \big\| \bar y(X(t),Z(t))-X(t)\big\|^2 \;dt.
   \end{align*}
With a view at \eqref{dx}, we conclude that
\begin{equation}
\label{eta-incr}
 \eta(X(T),Z(T)) - \eta(X(0),Z(0))
  \ge \;  a \int_0^T  \hat{g}_1(t) \dt X(t) \;dt
 + a\rho \int_0^T  \big\| \dt X(t)\big\|^2 \;dt.
  \end{equation}

We now estimate the increment of $\big\| \varPhi_m(\cdot)-U_m(\cdot)\big\|$ from 0 to $T$. As
 $\| \cdot\| $ is convex and $\varPhi_m(\cdot)$ and $U_m(\cdot)$ are paths, the chain rule applies as well: for any
 $\lambda_m(t) \in \partial \| \varPhi_m(t)- U_m(t)\|$ we have
\[
 \big\|\varPhi_m(T)-U_m(T)\big\| -  \big\|\varPhi_m(0)-U_m(0)\big\|
= \int_0^T \big\langle \lambda_m(t),  \dt \varPhi_m(t) - \dt U_m(t) \big\rangle\; dt.
\]
By \eqref{Um},  $\dt \varPhi_m(t) - \dt U_m(t) = b\big[ U_m(t)-\varPhi_m(t)\big]$ for almost all $t \ge 0$.
Furthermore,
\[
\lambda_m(t)=\frac{\varPhi_m(t)-U_m(t)}{\|\varPhi_m(t)-U_m(t)\|},\quad \text{if}\quad  \varPhi_m(t)\ne U_m(t).
\]
Therefore
\begin{equation}
\label{norm-incr}
 \big\|\varPhi_m(T)-U_m(T)\big\| -  \big\|\varPhi_m(0)-U_m(0)\big\| = - b \int_0^T \big\| \varPhi_m(t)-U_m(t)\big\| \; dt.
 \end{equation}

We can now combine \eqref{f-incr2}, \eqref{eta-incr}, and \eqref{norm-incr} to estimate the change of the Lyapunov function \eqref{Lyapunov}:
\begin{multline*}
W\big(X(T),Z(T),U(T)\big) - W\big(X(0),Z(0),U(0)\big)\quad \\
\quad \le   a  b  \sum_{m=1}^{M-1} \int_0^T \prod_{q=1}^m \hat{J}_{q u}(t)\big[\varPhi_{m +1}(t)-U_{m +1}(t)\big]\;dt\\
 - a\rho \int_0^T  \big\| \dt X(t)\big\|^2 \;dt
- b\sum_{m=2}^{M} \gamma_m  \int_0^T \big\|\varPhi_{m}(t)-U_{m}(t)\big\|\;dt.
\end{multline*}
Because the paths $X(t)$ and $U(\cdot)$ are bounded a.s. and the functions $f_m$ are locally Lipschitz,  a (random) constant $L$ exists, such that
$\big \| J_{m u}(t)\big\| \le L$ for $m=1,\dots,M-1$.
The last estimate entails:
\begin{multline}
\label{W-incr}
W\big(X(T),Z(T),U(T)\big) - W\big(X(0),Z(0),U(0)\big) \\
 \le - a\rho \int_0^T  \big\| \dt X(t)\big\|^2 \;dt - b \sum_{m=2}^{M} (\gamma_m-aL^{m-1})  \int_0^T \|\varPhi_m(t)-U_m(t)\|\;dt.
\end{multline}
By choosing $\gamma_m > aL^{m-1}$ for all $m$, we ensure that $W(\cdot)$ has the descent property to be used in our stability analysis at Step 3.
The fact that $L$ (and thus $\gamma_m$) may be different for different paths is irrelevant, because our analysis is path-wise.

\emph{Step 3: Analysis of the Limit Points.} Define the set
\begin{multline*}
\quad \Sc = \big\{ (x,z,u)\in X^* \times \Rb^n\times \Rb^m: \eta(x,z)=0,\\
 u_m=f_m(x,u_{m+1}), \ m=1,\dots,M-1,\ u_M=f_M(x)\big\}.\quad
\end{multline*}
Suppose $(\bar{x},\bar{z},\bar{u})$ is an accumulation point of the sequence $\{(x^k,z^k,u^k)\}$. If $\eta(\bar{x},\bar{z}) <0$ or
$\bar{u}_m\ne f_m(\bar{x},\bar{u}_{m+1})$, then
every solution $(X(t),Z(t),U(t))$ of the system \eqref{dx}--\eqref{dz},
starting from $(\bar{x},\bar{z},\bar{u})$ has $\|\dt X(0)\| > 0$ or $\|\varPhi_m(0)-U_m(0)\| >0$.
Using \eqref{W-incr} and arguing as in \cite[Thm. 3.20]{duchi2018stochastic} or \cite[Thm. 3.5]{majewski2018analysis},
we obtain a contradiction. Therefore,
we must have $\eta(\bar{x},\bar{z})=0$ and $\bar{u}_m= f_m(\bar{x},\bar{u}_{m+1})$, $m=1,\dots,M-1$, and
$\bar{u}_M= f_M(\bar{x})$. Suppose $\bar{x}\not \in X^*$. Then
\begin{equation}
\label{non-opt}
\dist\big(0,  G_1(\bar{x}) + N_X(\bar{x})\big) >0.
\end{equation}
Suppose the system \eqref{dx}--\eqref{dz}
starts from $(\bar{x},\bar{z},\bar{u})$ and $X(t)=\bar{x}$ for all $t\ge 0$. From \eqref{dz} and \eqref{Gamma-new-2}, in view of the
equations $\bar{y}(\bar{x},\bar{z})=\bar{x}$ and  $\bar{u}_M=f_M(\bar{x})$, we obtain  $U_M(t)=  f_M(\bar{x})$ for all $t \ge 0$. Proceeding by backward induction,
we conclude that $U_m(t)=  f_m(\bar{x},\bar{u}_{m+1})$ for all $t \ge 0$ and all $m=1,\dots,M-1$.
The inclusion \eqref{dz}, in view of \eqref{GF}, simplifies
\[
\dt{z}(t) \in a \big( G_1(\bar{x}) - z(t)\big).
\]
For the convex Lyapunov function $V(z) = \dist\big(z,G_1(\bar{x})\big)$,
we apply the classical chain formula \cite{brezis1971monotonicity}
on the path $Z(\cdot)$:
\[
V((Z(T)) - V(Z(0)) = \int_0^T \big\langle \partial V(Z(t)), \dt Z(t)\big\rangle \;dt.
\]
For $Z(t)\notin G_1(\bar{x})$, we have
\[
\partial V(Z(t)) = \frac{Z(t) - \proj_{G_1(\bar{x})}(Z(t))}{ \| Z(t) - \proj_{G_1(\bar{x})}(Z(t))\|}
\]
 and $\dt Z(t) = a (d(t) - Z(t))$ with some $d(t)\in G_1(\bar{x})$. Therefore,
\[
\big\langle \partial V(Z(t)), \dt Z(t)\big\rangle \le  - a \| Z(t) - \proj_{G_1(\bar{x})}(Z(t))\| = -a V(Z(t)).
\]
It follows that
\[
V((Z(T)) - V(Z(0)) \le - a \int_0^T  V(Z(t)) \;dt,
\]
and thus
\begin{equation}
\label{Z-conv}
\lim_{t\to\infty} \dist\big(Z(t),G_1(\bar{x})\big) = 0.
\end{equation}
It follows from \eqref{non-opt}--\eqref{Z-conv} that $T>0$ exists,
such that $ -Z(T) \not \in N_X(\bar{x})$, which yields $\dt X(T)\ne 0$. Consequently,
the path $X(t)$ starting from $\bar{x}$ cannot be constant (our supposition made right after \eqref{non-opt} cannot be true).  But if it is not constant, then again $T>0$ exists, such that $\dt X(T)\ne 0$. By Step 1,
the triple $(X(T),Z(T),U(T))$ would have to be an accumulation point of the sequence $\{(x^k,z^k,u^k)\}$, a case already excluded.
We conclude that every accumulation point $(\bar{x},\bar{z},\bar{u})$  of the sequence $\{(x^k,z^k,u^k)\}$ is in $\Sc$.
The convergence of the sequence $\big\{W(x^k,z^k,u^k)\big\}$ then follows in the same way as
\cite[Thm. 3.20]{duchi2018stochastic} or \cite[Thm. 3.5]{majewski2018analysis}. As $\eta(x^k,z^k)\to 0$,
the convergence of $\{f_1(x^k,u_2^k)\}$ follows as well. Since $f_m(x^k)-u_m^k \to 0$, $m=2,\dots,M$, the sequence $\{F_1(x^k)\}$ is convergent
as well.
\end{proof}
Adapting the proof of Lemma \ref{l:zk-bounded}, we obtain convergence of path-averaged stochastic subgradients.
\begin{corollary}
\label{c:conv-zk}
If the sequence $\{x^k\}$ is convergent to a single point $\bar{x}$, then every accumulation point
of $\{z^k\}$ is an element of the generalized gradient $G_{F_1}(\bar{x})$ satisfying the optimality condition at $\bar{x}$.
\end{corollary}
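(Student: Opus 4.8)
The plan is to prove the stronger statement $\dist\big(z^k,G_1(\bar{x})\big)\to 0$ a.s., where $G_1(\bar{x})$ is the generalized Jacobian of $F_1$ at $\bar{x}$ defined in \eqref{GF} (so that $G_{F_1}(\bar{x})=G_1(\bar{x})$); this at once places every accumulation point of $\{z^k\}$ in $G_1(\bar{x})$. I would reuse the auxiliary sequence $\tilde z^k = z^k + a\sum_{j\ge k}\tau_j[\varkappa_1^{j+1}]^T$ from the proof of Lemma \ref{l:zk-bounded}, for which $\tilde z^k-z^k\to 0$ a.s.\ and which obeys $\tilde z^{k+1}=(1-a\tau_k)\tilde z^k + a\tau_k[g_1^{k+1}]^T + a\tau_k r^k$ with $r^k=[\beta_1^{k+1}]^T+\tilde z^k-z^k\to 0$. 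Since $\tilde z^k-z^k\to 0$, the accumulation points of $\{z^k\}$ and $\{\tilde z^k\}$ coincide, so it suffices to control $\tilde z^k$.

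The key step, and the main obstacle, is to show that the driving term approaches the limiting subdifferential, i.e.\ $\dist\big([g_1^{k+1}]^T,G_1(\bar{x})\big)\to 0$. Because $x^k\to\bar{x}$, Theorem \ref{t:convergence} gives $u_m^k-F_m(x^k)\to 0$, and continuity of the locally Lipschitz functions $F_m$ from \eqref{Fm} yields $u_m^k\to F_m(\bar{x})$ for every $m$. Each exact Jacobian in \eqref{g-fold-exact} satisfies $J_m^{k+1}\in\partial f_m(x^{k+1},u_{m+1}^k)$ with $(x^{k+1},u_{m+1}^k)\to(\bar{x},F_{m+1}(\bar{x}))$, so upper semicontinuity of the Clarke Jacobian gives $\dist\big(J_m^{k+1},\partial f_m(\bar{x},F_{m+1}(\bar{x}))\big)\to 0$ (and likewise for $J_M^{k+1}$). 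Using boundedness of the Jacobians and of the folded vectors $g_m^{k+1}$ together with continuity of the sum-and-product folding, a backward induction on $m$ mirroring exactly the recursive definition \eqref{GF} of $G_m(\bar{x})$ with $v_{m+1}=F_{m+1}(\bar{x})$ shows $\dist\big(g_m^{k+1},G_m(\bar{x})\big)\to 0$, and in particular the claim for $m=1$.

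With this in hand I would run a contraction argument for the convex, $1$-Lipschitz function $V(z)=\dist\big(z,G_1(\bar{x})\big)$ on the convex compact set $G_1(\bar{x})$. Writing $\pi^k=\proj_{G_1(\bar{x})}\big([g_1^{k+1}]^T\big)$ and absorbing $[g_1^{k+1}]^T-\pi^k$ into the error, the recursion becomes $\tilde z^{k+1}=(1-a\tau_k)\tilde z^k+a\tau_k\pi^k+a\tau_k\tilde r^k$ with $\pi^k\in G_1(\bar{x})$ and $\tilde r^k\to 0$. Convexity of $V$, the identity $V(\pi^k)=0$, and $1$-Lipschitz continuity then give $V(\tilde z^{k+1})\le(1-a\tau_k)V(\tilde z^k)+a\tau_k\|\tilde r^k\|$. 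Since $0\le 1-a\tau_k<1$, $\sum_k a\tau_k=\infty$ by (A6), and $\|\tilde r^k\|\to 0$, the standard recursive inequality forces $V(\tilde z^k)\to 0$; hence $\dist\big(z^k,G_1(\bar{x})\big)\to 0$ and every accumulation point $\bar z$ of $\{z^k\}$ lies in $G_1(\bar{x})$.

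It remains to record that $\bar z$ satisfies the optimality condition at $\bar{x}$. Since $x^k\to\bar{x}$, the triple $(\bar{x},\bar z,\bar u)$ with $\bar u_m=F_m(\bar{x})$ is an accumulation point of $\{(x^k,z^k,u^k)\}$, so the analysis in the proof of Theorem \ref{t:convergence} gives $\eta(\bar{x},\bar z)=0$. By \eqref{opt-eta} this means $\bar y(\bar{x},\bar z)=\bar{x}$, i.e.\ $-\bar z\in N_X(\bar{x})$, which together with $\bar z\in G_1(\bar{x})$ yields the stationarity relation $0\in G_1(\bar{x})+N_X(\bar{x})$, as required.
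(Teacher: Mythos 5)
Your proposal is correct and follows essentially the route the paper intends: the paper's own ``proof'' is the one-line instruction to adapt the proof of Lemma \ref{l:zk-bounded}, and you do exactly that, reusing the auxiliary sequence $\tilde z^k$ and its recursion, while your discrete contraction argument with $V(z)=\dist\big(z,G_1(\bar{x})\big)$ is the natural discrete-time analogue of the argument already used in Step 3 of the proof of Theorem \ref{t:convergence}. The details you supply (upper semicontinuity of the Clarke Jacobians along $(x^{k+1},u_{m+1}^k)\to(\bar x,F_{m+1}(\bar x))$, the backward induction matching \eqref{GF}, and the standard recursion lemma under (A6)) are precisely what the paper leaves implicit, and they are sound.
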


In fact, if we introduced path-averaging of the vectors $\tilde{g}_{m}^{k}$ used in \eqref{g-fold}, in a way similar to
\eqref{def_zk}:
\[
z_m^{k+1} = z_m^k + a\tau_k \Big( \big[\tilde{g}_{m}^{k+1}\big]^T  - z_m^k\Big),  \quad m=1,\dots,M,
\]
we could extend Corollary \ref{c:conv-zk} to the convergence of each $\{z_m^k\}$ to an element
of the generalized gradient $G_{F_m}(\bar{x})$.

\section{Fixed Stepsize Performance}
\label{s:5}

Although our main interest is in nonsmooth problems, we present here performance guarantees after finitely many steps with a constant stepsize, when the functions are continuously differentiable.
We make stronger assumptions about the functions  $f_m$ in problem \eqref{main_prob} and about the estimation bias in (A6):
\begin{description}
\item[(A9)] The functions $f_m(\cdot,\cdot)$, $m=1,\dots,M-1$, and $f_M(\cdot)$ are continuously differentiable with Lipschitz continuous derivatives;
\item[(A10)] A constant $C_\delta>0$ exists, such that $\max\big(\|\delta_m^{k+1}\|,\|\Delta_m^{k+1}\|\big) \le C_\delta \tau_k$ with probability 1, for all $k$ and $m=1,\dots,M$;
\item[(A11)] A constant $L$ exists, such that
$\| {J}_{mu}^{k+1}\| \le L$ with probability 1, for all $k=0,1,2\dots$.
\end{description}

\vspace{0.5ex}

In (A10) and (A11) we need $C_\delta$ and $L$ to be the same for all paths, because we need one Lyapunov function for all paths.
Assumptions (A9)--(A11) imply (A2)--(A4).
We also do not need assumptions (A5) and (A6), since we are interested in finitely many iterations.

Our reasoning is similar to the line of argument in \cite[Sec. 3]{ghadimi2018single},
albeit with the multi-level nested structure. We assume that we carry out a finite number $N$ of iterations with a constant stepsize $\tau>0$.
A similar analysis of the rate of convergence of a closely related method has been recently provided in
\cite[Sec. 3]{balasubramanian2020stochastic}, albeit under strong assumptions about fourth moments of the errors.

In our considerations below, $C$ is a sufficiently large deterministic constant dependent on various constants featuring in the
problem definition, such as the Lipschitz constants, the variances of the errors, etc., but not dependent on the number of iterations $N$,
algorithm parameters, or the stepsize $\tau$.

{ We shall use a different Lyapunov function than \eqref{Lyapunov}, namely
\begin{equation}
\label{Lyapunov-2}
\Wc(x,z,u) = a F_1(x) -  \eta(x,z) + \sum_{m=2}^{M-1} \gamma_m\, \big\|f_m(x,u_{m+1})-u_m\big\|^2 + \gamma_M \big\|f_M(x)-u_M\big\|^2.
\end{equation}
It is a direct generalization to the multi-level case of the function used in \cite{ghadimi2018single}.
The reason is that a smooth function is more amenable for fixed step analysis.}
We shall need the following condition about the
function \eqref{Lyapunov-2}.
\begin{description}
\item[(A12)] Constants $\gamma_m^{\min}$ exist, for $m=2,\dots,M$, such that if all $\gamma_m \ge \gamma_m^{\min}$ then the function \eqref{Lyapunov-2} is bounded from below on $X\times\Rb^n\times\Rb^{d_2}\times \dots \times \Rb^{d_M}$.
\end{description}

{
By using assumptions (A7)--(A10), and adapting to discrete time  our  analysis from Step~2 of the proof of Theorem \ref{t:convergence},
we estimate the three terms of the difference
\begin{multline*}
\Eb\big[\Wc(x^{k+1},z^{k+1},u^{k+1})\,\big|\, \Fc_k\big] - \Wc(x^k,z^k,u^k)\\
=
a \big( \Eb\big[F_1(x^{k+1})\,\big|\, \Fc_k\big] - F_1(x^k)\big)
 - \big( \Eb\big[\eta(x^{k+1},z^{k+1})\,\big|\, \Fc_k\big] - \eta(x^k,z^k) \big)\\
 +
\sum_{m=2}^{M-1} \gamma_m \big( \Eb\big[ \|f_m(x^{k+1},u_{m+1}^{k+1})-u_m^{k+1} \|^2 \,\big|\, \Fc_k\big] - \|f_m(x^{k},u_{m+1}^{k})-u_m^{k} \|^2\big) \\
+ \gamma_M \big( \Eb\big[ \|f_M(x^{k+1})-u_M^{k+1} \|^2 \,\big|\, \Fc_k\big] - \|f_M(x^{k})-u_M^{k} \|^2\big).
\end{multline*}


Denote $d^k=y^k-x^k$.
The decrease of the first part of the Lyapunov function can be estimated as follows (with an adjustment of the constant $C$):
\begin{equation}
\label{f-incr-dis}
\begin{aligned}
 \Eb\big[F_1(x^{k+1})\,\big|\, \Fc_k\big] - F_1(x^k)
&\le {\tau} \hat{g}_1^{k+1} d^k + \tau \|\hat{g}_1^{k+1} - \hat{g}_1^{k}\| \|d^k\|
  +  \tau \|G_1(x^k) - \hat{g}_1^{k}\| \|d^k\|
  + C {\tau}^2 \\
  &\le {\tau} \hat{g}_1^{k+1} d^k
  +  \tau \|G_1(x^k) - \hat{g}_1^{k}\| \|d^k\|
  + C^{\text{new}} {\tau}^2 .
  \end{aligned}
\end{equation}
Recall that for continuously differentiable functions
\[
\hat{g}_M^k =  f'_{M}(x^k), \quad \hat{g}_m^k =   f'_{mx}(x^k,u_{m+1}^k) +  f'_{mu}(x^k,u_{m+1}^k) \hat{g}_{m+1}^k,\quad m=M-1,\dots,1,
\]
and
\[
G_m(x^k) = f'_{M}(x^k),\quad G_m^k =   f'_{mx}\big(x^k,F_{m+1}(x^k)\big) +  f'_{mu}\big(x^k,F_{m+1}(x^k)\big) G_{m+1}^k,\quad m=M-1,\dots,1.
\]
Let $L_G$ be an upper bound on the Lipschitz constants of all Jacobians featuring in the formulas above.
Proceeding by induction backward in $m$, we can verify that
\[
\big\|G_1(x^k) - \hat{g}_1^{k}\big\| \le \sum_{m=1}^{M-1} L_G^{m}\,\big\| F_{m+1}(x^k) - u_{m+1}^k\big\|.
\]
Furthermore, for $m=2,\dots,M-1$, in a similar way we obtain
\begin{align*}
\big\| F_{m}(x^k) - u_{m}^k\big\| &\le \big\| f_m(x^k,u_{m+1}^k) - u_m^k\big\|  + L \big\| F_{m+1}(x^k) - u_{m+1}^k\big\|\\
&\le \sum_{q=m}^M L^{q-m} \big\| f_q(x^k,u_{q+1}^k) - u_q^k\big\|.
\end{align*}
Therefore,
\[
\big\|G_1(x^k) - \hat{g}_1^{k}\big\| \le \sum_{m=2}^{M} L_G^{m}\sum_{q=m}^M L^{q-m} \big\| f_q(x^k,u_{q+1}^k) - u_q^k\big\|
= \sum_{q=2}^{M} \sum_{m=2}^q L_G^{m} L^{q-m} \big\| f_q(x^k,u_{q+1}^k) - u_q^k\big\|.
\]
Substitution into \eqref{f-incr-dis} yields (with $K_q = \sum_{m=2}^q L_G^{m} L^{q-m}$)
\begin{equation}
\label{f-incr-dis-2}
 \Eb\big[F_1(x^{k+1})\,\big|\, \Fc_k\big] - F_1(x^k)
\le {\tau} \hat{g}_1^{k+1} d^k
  +  \tau \|d^k\| \sum_{q=2}^{M} K_q \,\big\| f_q(x^k,u_{q+1}^k) - u_q^k\big\|
  + C {\tau}^2 .
\end{equation}
}

Due to (A6) and the continuous differentiability of the function $\eta(\cdot,\cdot)$, the change of the second part of the Lyapunov function can be
estimated as follows (the inequalities are derived from \eqref{opt-eta}, as in \eqref{eta-incr}):
\begin{equation}
\label{eta-incr-dis}
 \Eb\big[ \eta(x^{k+1},z^{k+1}) - \eta(x^{k},z^{k})\big]
  \ge \; a {\tau}    \hat{g}_1^{k+1} d^k   + a\rho {\tau}  \big\| d^k \big\|^2 - C{\tau}^2.
  \end{equation}
  We remark here that the Lipschitz constant of the gradient of $\eta$ has a uniform bound for all $\rho>0$, and thus
  we may assume that $C$ is independent of $\rho$.
 Finally, we analyze the decrease of the third part, associated with the ``tracking errors'' $f_m(x^k,u_{m+1}^k) - u_m^k$, for $m= 1,\dots,M$ and $k=0,1,...,N$. { When $m=M$, the argument $u_{m+1}$ is not present, but we keep uniform notation for brevity.}

\begin{lemma}
\label{l:tracking}
A constant $C$ exists, such that
if the method carries out $N$ iterations with a constant stepsize $\tau \in \big(0,1/(4b)\big)$, then
for  $m=1,2\dots,M$,
\begin{multline}
\label{fm-drop}
 \Eb\big[ \|f_m(x^{N},u_{m+1}^{N})-u_m^{N}\|^2\big]-
  \|f_m(x^{0},u_{m+1}^0)-u_m^{0}\|^2  \\
  \le
-\frac{1}{2} b \tau \sum_{k=0}^{N-1}  \Eb\big[\|f_m(x^{k},u_{m+1}^k)-u_m^{k}\|^2\big]   + C N \tau^2.
\end{multline}
\end{lemma}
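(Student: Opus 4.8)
The plan is to prove \eqref{fm-drop} by backward induction on $m$, establishing \emph{simultaneously} the one-step drift inequality and a uniform second-moment bound $\sup_{0\le k\le N}\Eb\big[\|f_m(x^k,u_{m+1}^k)-u_m^k\|^2\big]\le C$; the two must be proved together because, as seen below, the drift at level $m$ couples to the tracking error at level $m+1$. Abbreviate $r_m^k = f_m(x^k,u_{m+1}^k)-u_m^k$ (for $m=M$ the argument $u_{m+1}$ is absent, which gives the base case without inter-level coupling). First I would expand $r_m^{k+1}$ using the update \eqref{def_ukm} together with a first-order Taylor expansion of $f_m$ around $(x^k,u_{m+1}^k)$. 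Since the Jacobian correction in \eqref{def_ukm} is designed to cancel the first-order increment of $f_m$, this yields
\[
r_m^{k+1} = (1-b\tau)\, r_m^k + \Xi_m^{k+1} + B_m^{k+1},
\]
where $\Xi_m^{k+1}$ collects the stochastic terms $-E_m^{k+1}\big[\begin{smallmatrix}x^{k+1}-x^k\\ u_{m+1}^{k+1}-u_{m+1}^k\end{smallmatrix}\big]-b\tau e_m^{k+1}$, and $B_m^{k+1}$ collects the remainders: the second-order Taylor remainder (bounded by $L_G\big(\|x^{k+1}-x^k\|^2+\|u_{m+1}^{k+1}-u_{m+1}^k\|^2\big)$ via (A9)), the Jacobian-point and bias terms $\big(f'_m(x^k,u_{m+1}^k)-f'_m(x^{k+1},u_{m+1}^k)-\Delta_m^{k+1}\big)\big[\begin{smallmatrix}\cdot\\ \cdot\end{smallmatrix}\big]$, and the terms $-b\tau\big(f_m(x^{k+1},u_{m+1}^k)-f_m(x^k,u_{m+1}^k)\big)$ and $-b\tau\delta_m^{k+1}$. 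Using $\|x^{k+1}-x^k\|=\tau\|y^k-x^k\|\le\tau\,\mathrm{diam}(X)$ from (A1), the Lipschitz continuity of the derivatives from (A9), and $\|\Delta_m^{k+1}\|,\|\delta_m^{k+1}\|\le C_\delta\tau$ from (A10), every term of $B_m^{k+1}$ is of order $\tau^2$, up to the factor $\|u_{m+1}^{k+1}-u_{m+1}^k\|$ (and its square), which is where the coupling to level $m+1$ enters.

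Next I would verify $\Eb[\Xi_m^{k+1}\mid\Fc_k]=0$. The delicate piece is $E_{mu}^{k+1}(u_{m+1}^{k+1}-u_{m+1}^k)$: the increment is built by the backward recursion from the noises $E_\ell^{k+1},e_\ell^{k+1}$ with $\ell\ge m+1$ and is therefore not $\Fc_k$-measurable, but (A8) makes $E_{mu}^{k+1}$ conditionally independent of exactly those variables given $\Fc_k$, so the product factorizes and its conditional mean vanishes. The remaining pieces of $\Xi_m^{k+1}$ are centered by (A7): the block $E_{mx}^{k+1}(x^{k+1}-x^k)$ has an $\Fc_k$-measurable multiplier, and $e_m^{k+1}$ is centered directly. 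Assumption (A8) likewise lets me bound $\Eb[\|\Xi_m^{k+1}\|^2\mid\Fc_k]\le C\tau^2(1+\|r_{m+1}^k\|^2)$ and $\big\|\Eb[B_m^{k+1}\mid\Fc_k]\big\|\le C\tau^2(1+\|r_{m+1}^k\|+\|r_{m+1}^k\|^2)$, the key intermediate step being the control of $\Eb[\|u_{m+1}^{k+1}-u_{m+1}^k\|^2\mid\Fc_k]$ through \eqref{def_ukm}: reading off that update shows the level-$(m+1)$ increment is $O(\tau)$ up to the multiplier $\|r_{m+1}^k\|$ coming from $b\tau\big(\tilde h_{m+1}^{k+1}-u_{m+1}^k\big)$ and up to variance-bounded noise.

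Finally, squaring the recursion and taking $\Eb[\cdot\mid\Fc_k]$ gives
\[
\Eb\big[\|r_m^{k+1}\|^2\,\big|\,\Fc_k\big]\le(1-b\tau)^2\|r_m^k\|^2 + 2(1-b\tau)\big\langle r_m^k,\Eb[B_m^{k+1}\mid\Fc_k]\big\rangle + \Eb\big[\|\Xi_m^{k+1}\|^2+\|B_m^{k+1}\|^2\,\big|\,\Fc_k\big],
\]
the cross term with $\Xi_m^{k+1}$ vanishing by the martingale property. Taking full expectations, invoking the inductive uniform bound $\Eb[\|r_{m+1}^k\|^2]\le C$ to turn every level-$(m+1)$ contribution into $O(\tau^2)$, and applying Young's inequality to split each cross term of the form $C\tau^2\|r_m^k\|\cdot(\cdot)$ into a small multiple of $b\tau\|r_m^k\|^2$ plus an $O(\tau^2)$ constant, I obtain $\Eb[\|r_m^{k+1}\|^2]\le(1-\tfrac12 b\tau)\Eb[\|r_m^k\|^2]+C\tau^2$; here $\tau<1/(4b)$ is used so that $(1-b\tau)^2\le 1-\tfrac74 b\tau$, leaving ample budget to absorb the Young-generated $\|r_m^k\|^2$ terms. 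Unrolling this contraction yields the uniform bound $\Eb[\|r_m^k\|^2]\le\|r_m^0\|^2+C$, closing the induction, while summing it over $k=0,\dots,N-1$ telescopes to exactly \eqref{fm-drop}. The main obstacle is precisely this inter-level coupling: the tracking error at level $m$ is driven by the increment $u_{m+1}^{k+1}-u_{m+1}^k$, which is itself proportional to the tracking error at level $m+1$; the backward induction carrying a uniform $L^2$ bound is what decouples this and keeps all remainders at order $\tau^2$, and (A8) is the assumption that makes the coupled cross-level noise centered.
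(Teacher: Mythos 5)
Your proposal is correct and follows essentially the same route as the paper: expand the squared tracking error one step, exploit the cancellation of the Jacobian correction term in \eqref{def_ukm} against the first-order increment of $f_m$, center the noise conditionally, absorb cross terms by Young's inequality using $\tau<1/(4b)$, and telescope over $k$. Your explicit backward induction on $m$, carrying a uniform second-moment bound on the tracking errors and invoking (A8) to center the cross-level product $E_{mu}^{k+1}\big(u_{m+1}^{k+1}-u_{m+1}^k\big)$, is in fact a careful elaboration of the inter-level coupling that the paper compresses into the remark that for $m<M$ ``the remaining steps are identical.''
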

\begin{proof}
First, we consider the term associated with $f_M(\cdot)$ to expose our ideas.
Expanding the square, we obtain
\begin{multline}
\label{H-first}
\big\|f_M(x^{k+1})-u_M^{k+1}\big\|^2 \\
= \big\|f_M(x^{k})-u_M^{k}\big\|^2 + 2 \big\langle f_M(x^{k})-u_M^{k} ,  f_M(x^{k+1})-u_M^{k+1} - f_M(x^{k}) + u_M^{k} \big\rangle \\
 + \big\|f_M(x^{k+1})-u_M^{k+1} - f_M(x^{k}) + u_M^{k}\big\|^2.
\end{multline}
 Due to assumption (A9),
\[
f_M(x^{k+1})- f_M(x^k) = {\tau} J_M^{k+1} d^k + r_M^{k+1},
\]
with $\Eb\big[\|r_M^{k+1}\| \,\big|\, \Fc_k\big] \le C {\tau}^2$. By \eqref{def_ukM},
\begin{align*}
u_M^{k+1} - u_M^k &= {\tau} \tilde{J}^{\,k+1}d^k + b {\tau} \big(\tilde{h}_M^{k+1}-u_M^k\big) \\
&= {\tau} {J}^{k+1}d^k + {\tau} E_M^{k+1}d^k + {\tau} \Delta_M^{k+1} d^k + b {\tau} (f_M(x^{k+1}) - u^k) + b {\tau} e_M^{k+1} + b{\tau} \delta_M^{k+1}.
\end{align*}
Therefore
\begin{align*}
\lefteqn{f_M(x^{k+1})- f_M(x^k) - u_M^{k+1} + u_M^k}\quad\\
&= -b{\tau} (f_M(x^{k+1}) - u_M^k)- {\tau} E_M^{k+1}d^k - {\tau} \Delta^{k+1} d^k - b {\tau} e_h^{k+1} - b{\tau} \delta_h^{k+1}+  r_M^{k+1} \\
&= -b{\tau} (f_M(x^{k}) - u_M^{k})  - {\tau} E_M^{k+1}d^k  - b {\tau} e_h^{k+1}+  r^{k+1},
\end{align*}
where $\|r^{k+1}\| \le C {\tau}^2$.
Then
\[
\Eb\big[\big\| f_M(x^{k+1})- f_M(x^k) - u_M^{k+1} + u_M^k\big\|^2\,\big|\,\Fc_k\big] \le 2b {\tau}^2 \big\| f_M(x^{k}) - u_M^{k}\big\|^2 + 2 C {\tau}^2.
\]
We can now substitute the last  two expressions into the scalar product in \eqref{H-first} and take the conditional expectation of both sides with respect to $\Fc_k$:
\[
 \Eb\big[ \|f_M(x^{k+1})-u_M^{k+1}\|^2 \,\big|\, \Fc_k\big]  \le (1- b {\tau} +2 b^2{\tau}^2)\|f_M(x^{k})-u_M^{k}\|^2   + C {\tau}^2.
\]
Adding these inequalities for $k=0,1,\dots,N-1$ and taking the expected value of both sides, we get:
\[
 \Eb\big[ \|f_M(x^{N})-u_M^{N}\|^2\big]-
  \|f_M(x^{0})-u_M^{0}\|^2 \le
-b \tau(1-2 b\tau) \sum_{k=0}^N  \Eb\big[\|f_M(x^{k})-u_M^{k}\|^2\big]   + C N \tau^2.
\]
With $\tau < 1/(4b)$, we obtain \eqref{fm-drop} for $m=M$.


For $1 \le m < M$ the analysis follows the same steps. The expansion \eqref{H-first} is similar:
\begin{multline*}
\big\|f_m(x^{k+1},u_{m+1}^{k+1})-u_M^{k+1}\big\|^2 =  \big\|f_m(x^{k},u_{m+1}^k)-u_m^{k}\big\|^2 \\
 +
2 \big\langle f_m(x^{k},u_{m+1}^k)-u_m^{k} ,  f_m(x^{k+1},u_{m+1}^{k+1})-u_m^{k+1} - f_m(x^{k},u_{m+1}^k) + u_m^{k} \big\rangle \\
 + \big\|f_m(x^{k+1},u_{m+1}^{k+1})-u_m^{k+1} - f_m(x^{k},u_{m+1}^k) + u_m^{k}\big\|^2.
\end{multline*}
The only difference is the presence of the arguments $u_{m+1}^{k}$ and $u_{m+1}^{k+1}$. Due to assumption (A9),
\[
 f_m(x^{k+1},u_{m+1}^{k+1})- f_m(x^{k},u_{m+1}^k) = {\tau} J_{mx}^{k+1} d^k + J_{mu}^{k+1}(u_{m+1}^{k+1}-u_{m+1}^{k}) + r_m^{k+1}.
\]
The additional term, $J_{mu}^{k+1}(u_{m+1}^{k+1}-u_{m+1}^{k})$ is compensated (with stochastic errors) by the corresponding term in the update
of $u_m$ by \eqref{def_ukm}:
\[
u_m^{k+1} - u_m^k = {\tau} \tilde{J}_{mx}^{\,k+1} d^k + \tilde{J}_{mu}^{\,k+1} (u_{m+1}^{k+1}-u_{m+1}^{k})  + b {\tau} \big(\tilde{h}_m^{k+1}-u_m^k\big).
\]
The remaining steps are identical and lead to the estimate  \eqref{fm-drop} for $m<M$.
\end{proof}

Integrating \eqref{f-incr-dis-2}, \eqref{eta-incr-dis}, and \eqref{fm-drop}, we obtain the following inequality (we write $\varPhi_m^k$ for $f_m(x^k,u_{m+1}^k)$ and $\varPhi_M^k$ for $f_M(x^k)$):
{
\begin{multline*}
\Eb\big[ \Wc(x^{k+1},z^{k+1},u^{k+1})\,\big|\,\Fc_k\big]  - \Wc(x^{k},z^{k},u^{k}) \le -a\rho {\tau}\| d^k\|^2  \\
+ a  \tau \|d^k\| \sum_{m=2}^{M} K_{m}\,\big\| \varPhi_{m}^{k} - u_{m}^k\big\|
- \frac{1}{2} b {\tau} \sum_{m=2}^{M} \gamma_m \|\varPhi_m^{k}-u_m^{k}\|^2
 + C {\tau}^2 \\
 \le - \tau \sum_{m=2}^{M} \bigg( \ \frac{a\rho}{M-1} \| d^k\|^2
 - aK_m \| d^k\| \|\varPhi_m^{k}-u_m^{k}\|
 +  \frac{b\gamma_m}{2}\|\varPhi_m^{k}-u_m^{k}\|^2 \bigg)  + C {\tau}^2 ,
\end{multline*}
where $C$  is a constant independent of $\tau$ and $\gamma_m$. If
\[
\gamma_m > \frac{a K_m^2(M-1)}{2b\rho}, \quad m=2,\dots,M,
\]
then the quadratic forms in the parentheses are positive definite. Then $\delta>0$ exists, such that
\[
\label{W-discrete}
\Eb\big[ \Wc(x^{k+1},z^{k+1},u^{k+1})\,\big|\,\Fc_k\big]  - \Wc(x^{k},z^{k},u^{k}) \le -\delta {\tau}\| d^k\|^2
- \delta {\tau} \sum_{m=2}^{M} \|\varPhi_m^{k}-u_m^{k}\|^2
 + C {\tau}^2.
\]
Taking the expected value of both sides and summing for $k$ from 0 to $N-1$, we get
\begin{align*}
\lefteqn{\Eb\big[ \Wc(x^{N},z^{N},u^{N})\big]  - \Wc(x^{0},z^{0},u^{0})}\quad \\
&\le \Eb\bigg[ -\delta {\tau} \sum_{k=0}^{N-1}\| y^k-x^k\|^2
- \delta {\tau} \sum_{m=2}^{M} \sum_{k=0}^{N-1} \|\varPhi_m^{k}-u_m^{k}\|^2 \bigg]
 + CN {\tau}^2.
\end{align*}
}
Let $\Wc_{\min}$ be the minimum value of $\Wc(\cdot,\cdot,\cdot)$, existing by virtue of (A12). Dividing the last inequality by $\tau\delta N$
and simplifying, we obtain
\[
 \frac{1}{N} \sum_{k=0}^{N-1}\Eb \bigg[\| y^k-x^k\|^2 +   \sum_{m=2}^{M} \|f_m(x^{k},u_{m+1}^k)-u_m^{k}\|^2 \bigg]\\
 \le \frac{1}{\tau\delta N} \big( \Wc(x^{0},z^{0},u^{0}) - \Wc_{\min}\big) + \frac{C\tau}{\delta}.
\]
With $\tau \sim N^{-1/2}$,
\[
 \frac{1}{N} \sum_{k=0}^{N-1} \Eb\bigg[  \| y^k-x^k\|^2 +  \sum_{m=2}^{M}  \|f_m(x^{k},u_{m+1}^k)-u_m^{k}\|^2  \bigg] \\
\le \frac{\text{Const}}{\sqrt{N}}.
\]
Therefore, at a random iteration $R$, drawn from the uniform distribution in $\{0,1,\dots, N-1\}$, independently from other random quantities in the method,
we have
\begin{equation}
\label{rate}
\Eb\bigg[ \| y^R-x^R\|^2 + \sum_{m=2}^{M}\|f_m(x^{R},u_{m+1}^R)-u_m^{R}\| \bigg] \le \frac{\text{Const}}{\sqrt{N}}.
\end{equation}
No conditions on the method's parameters are required for this result, as opposed to our earlier analysis of the two-level case in \cite{ghadimi2018single}. This improvement is due to the linear correction terms in \eqref{def_ukM}--\eqref{def_ukm}, accounting for the movement of $x$.

It also follows from \eqref{fm-drop} that we can write an estimate similar to \eqref{rate} for each individual tracking error.
For each $m=1,\dots,M$,
\begin{equation}
\label{track-rate}
\Eb\big[\|f_m(x^{R},u_{m+1}^R)-u_m^{R}\|^2\big] \le  \frac{2}{b \sqrt{N}}\|f_m(x^{0},u_{m+1}^0)-u_m^{0}\|^2 + \frac{C}{\sqrt{N}}.
\end{equation}

If we consider the quantities on the left hand side of \eqref{rate} and \eqref{track-rate}
as measures of non-optimality and tracking errors of the triple $(x^R,z^R,u^R)$, we can deduce  that $\mathcal{O}(1/\varepsilon^2)$ iterations are needed to achieve an error of
size $\varepsilon$. The reference \cite[Sec. 2]{ghadimi2018single} provides a thorough discussion of similar optimality measures used for constrained
stochastic optimization.

\section*{Acknowledgments} The author thanks Saeed Ghadimi for finding a mistake in an earlier version of \S \ref{s:5}.

\end{document}